\let\subsectiontemp\subsection
\renewcommand{\subsection}[1]{ 
    \subsectiontemp{#1} \hfill\vspace{0.5\linespacing} 
} 
\newtheorem{theorem}{Theorem}[section]
\newtheorem{corollary}[theorem]{Corollary}
\newtheorem{lemma}[theorem]{Lemma}
\newtheorem{formula}[theorem]{Formula}
\newtheorem{conjecture}[theorem]{Conjecture}
\theoremstyle{definition}
\newtheorem{mytable}[theorem]{Table}
\numberwithin{equation}{section}
\numberwithin{figure}{section}
\numberwithin{table}{section}
\newcommand{\lrabs}[1]{\left\lvert #1 \right\lvert}
\newcommand{\lrp}[1]{\left(#1\right)}
\newcommand{\lrb}[1]{\left[#1\right]}
\newcommand{\lrfloor}[1]{\left\lfloor #1 \right\rfloor}
\DeclareMathOperator{\NN}{\mathbb{N}}
\DeclareMathOperator{\Tr}{Tr}
\newcommand{\nw}{{\textnormal{new}}}
\begin{document}

\title{Dimension sequences of modular forms}

\subjclass[2020]{Primary 11F11.}
\keywords{Modular forms; Dimension Formula}

\author[E. Ross]{Erick Ross}
\address[E. Ross]{School of Mathematical and Statistical Sciences, Clemson University, Clemson, SC}
\email{erickr@clemson.edu}

\begin{abstract}
    For $N \geq 1$, let $S_{2}^{\text{new}}(N)$ denote the newspace of cuspidal modular forms of weight $2$ and level $N$. In 2004, Greg Martin conjectured that as a sequence in $N$, $\dim S_2^{\text{new}}(N)$ takes on all possible natural numbers. In this paper, we investigate several generalizations and variations of this type of problem. In each case, we provide a complete characterization of when such a property holds.
\end{abstract}

\maketitle

\section{Introduction} \label{sec:intro}

For $N,k\geq1$, we let $S_{2k}(N)$ denote the space of cuspidal modular forms of weight $2k$ and congruence subgroup $\Gamma_0(N)$, and we let $S_{2k}^\nw(N)$ denote its new subspace.
These spaces are of great importance in Number Theory, and we would like to answer certain questions about how large they can be.
In particular, we would like to determine which of the natural numbers can be realized as dimensions of these spaces. (Here, $0$ is a natural number.)

In 2000, Csirik-Wetherell-Zieve proved that the sequence $\{\dim S_2(N)\}_{N\ge1}$ does not take on all natural numbers, interpreting these values as the genera of modular curves \cite[Section 4]{csirik}. 
In 2004, Martin considered the possible dimensions of $S_2^\nw(N)$, conjectured that the sequence $\{\dim S_2^\nw(N)\}_{N\ge1}$ takes on all possible natural numbers \cite[Conjecture 27]{martin}. 
As a corollary of certain explicit bounds on $\dim S_2^\nw(N)$, we recently disproved this conjecture by showing that $67846$ can not realized as a value of $\{\dim S_2^\nw(N)\}_{N\ge1}$ \cite[Section 7]{ross}.

In this paper, we investigate the question of when precisely results of this nature can occur, considering several generalizations and variations on the above problems. First, we consider general weight $2k$ and show that, in fact, the generalized version of Martin's conjecture can never hold.
\begin{theorem} \label{thm:N-indexed}
    For each fixed $k \ge 1$, the sequences $\{ \dim S_{2k}(N) \}_{N\ge1}$ and $\{\dim S_{2k}^\nw(N) \}_{N\ge1}$ both omit some natural number.
\end{theorem}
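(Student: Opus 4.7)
For $k=1$, the claim about $\{\dim S_2(N)\}_{N\geq 1}$ is exactly the theorem of Csirik-Wetherell-Zieve \cite{csirik}, and for $\{\dim S_2^\nw(N)\}_{N\geq 1}$ the value $67846$ from \cite{ross} furnishes an omitted natural number. So the new content is $k\geq 2$, and my plan is a growth-plus-pigeonhole argument based on the classical dimension formula
\[
\dim S_{2k}(N) = (2k-1)(g_N - 1) + \lfloor k/2 \rfloor \nu_2(N) + \lfloor 2k/3 \rfloor \nu_3(N) + (k-1)\nu_\infty(N),
\]
where $g_N = g(X_0(N))$ and $\nu_2,\nu_3,\nu_\infty$ are the usual elliptic and cuspidal counts for $\Gamma_0(N)$.

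First I would combine this formula with the standard estimate $g_N \geq \psi(N)/12 - O(\sigma_0(N))$, where $\psi(N) = [\Sl_2(\ZZ):\Gamma_0(N)] \geq N$, to produce an explicit linear lower bound of the form $\dim S_{2k}(N) \geq \tfrac{2k-1}{12} N - C_k$ valid for all $N\geq 1$ (with $C_k$ absorbing the small-$N$ corrections). This ensures that for every target value $D$, the set $\{N : \dim S_{2k}(N) \leq D\}$ has size at most $12(D+C_k)/(2k-1)$. Whenever $2k-1 > 12$ (i.e., $k \geq 7$), pigeonhole immediately produces an omitted value in $\{0,1,\dots,D\}$ for $D$ sufficiently large. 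For the remaining range $2 \leq k \leq 6$, I would sharpen the argument by reducing the dimension formula modulo $2k-1$, which annihilates the genus term and leaves the bounded expression $\lfloor k/2\rfloor\nu_2(N) + \lfloor 2k/3\rfloor\nu_3(N) + (k-1)\nu_\infty(N) \pmod{2k-1}$; combining this congruence restriction with the finite list of $N$ supplied by the growth bound, one hopes to verify case by case that some residue class mod $2k-1$ is attained only finitely often, yielding an omitted natural number in that class.

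For the newspace variant, I would repeat the strategy via Möbius inversion: write $\dim S_{2k}^\nw(N) = \sum_{d \mid N} \beta(N/d)\dim S_{2k}(d)$ for the multiplicative function $\beta$ encoding old-form multiplicities, derive the analogous linear lower bound (extending the weight-$2$ bound of \cite{ross} to arbitrary $k$), and apply the same pigeonhole/congruence machinery. The main obstacle I anticipate is uniformity across small $k$: pigeonhole degrades when $2k-1$ is small, and the mod-$(2k-1)$ restriction loses bite when $\gcd(k-1, 2k-1) > 1$. Handling these edge cases cleanly—likely by combining an auxiliary congruence modulo $k-1$ with an explicit finite numerical check at low levels $N$—looks like the technical core of the argument.
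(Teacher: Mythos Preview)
Your overall architecture---reduce to finitely many $k$, then handle each remaining $k$ by a finite search---is the same as the paper's, but both stages of your plan have problems.

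For the reduction to finitely many $k$: your pigeonhole argument is morally correct for $k\ge 7$, but the estimate $g_N \ge \psi(N)/12 - O(\sigma_0(N))$ is false. The cusp count $\nu_\infty(N)$ is not $O(\sigma_0(N))$; for $N=p^{2m}$ one has $\nu_\infty(N)\asymp \sqrt{N}$. The correct bound $\dim S_{2k}(N)\ge \tfrac{2k-1}{12}N - O(N^{3/4})$ still makes the pigeonhole go through, so this is a fixable slip. More to the point, the paper bypasses pigeonhole entirely with a one-line observation: $\dim S_{2k}(N)\ge \dim S_{2k}(1)$, and $\dim S_{2k}(1)>0$ for all $k\notin\{1,2,3,4,5,7\}$, so those sequences omit $0$. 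For the newspace (where monotonicity in $N$ fails) the paper cites a finite table of all $(N,k)$ with $\dim S_{2k}^\nw(N)=0$ to the same effect.

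For the remaining small $k$, your congruence idea is where the real gap lies. The expression $\lfloor k/2\rfloor\nu_2(N)+\lfloor 2k/3\rfloor\nu_3(N)+(k-1)\nu_\infty(N)$ is not bounded: all three of $\nu_2,\nu_3,\nu_\infty$ grow with $N$, and there is no reason their combination modulo $2k-1$ should avoid any residue class. For instance at $k=2$ you would need $\nu_2(N)+\nu_3(N)+\nu_\infty(N)$ to miss a residue mod $3$ for all large $N$, and it does not. The paper does not attempt anything structural here; it simply establishes an explicit increasing lower bound $B_k(N)$, then for each of the six exceptional $k$ searches $N\le N_0$ (with $B_k(N_0)$ exceeding the candidate) to certify a specific omitted value. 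Your proposal would ultimately have to fall back on the same direct search, and the congruence layer contributes nothing.
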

For each $k \ge 1$, we also determine the first omitted dimension for these sequences.

The above result for the full space $S_{2k}(N)$ is perhaps not too surprising since its dimension has a growth rate oscillating between $N$ and $N \log \log N$. However, it is much more surprising for the newspace $S_{2k}^\nw(N)$, because the growth rate in $N$ of $\dim S_{2k}^\nw(N)$ oscillates between $N / \log \log N$ and $N$.

We also consider the corresponding question for fixed level and varying weight. We give a complete classification of when these weight-indexed sequences take on every natural number.

\begin{theorem} \label{thm:k-indexed}
    For fixed $N \ge 1$, the sequence $\{ \dim S_{2k}(N) \}_{k\ge1}$ takes on every natural number 
    iff $N \in \{1,2,3,4\}$.
    Similarly, the sequence $\{ \dim S_{2k}^\nw(N) \}_{k\ge1}$ 
    takes on every natural number iff $N \in \{1,2,3,4,8,12,16,18\}$.
\end{theorem}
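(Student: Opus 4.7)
The plan is to treat the full space and the newspace separately, in each case expressing $\dim S_{2k}$ as a linear function of $k$ plus a bounded periodic correction and then splitting into a density-based argument for large $N$ and an explicit computation for finitely many small $N$.

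For the full space, the classical formula (for $k \ge 2$)
\[
\dim S_{2k}(\Gamma_0(N)) = (2k-1)(g-1) + \lfloor k/2 \rfloor \nu_2 + \lfloor 2k/3 \rfloor \nu_3 + (k-1)\nu_\infty
\]
grows linearly in $k$ with slope $I(N)/6$, where $I(N) = [\Sl_2(\ZZ):\Gamma_0(N)]$. If $I(N) > 6$ then the image has asymptotic density at most $6/I(N) < 1$ in $\NN$, hence omits infinitely many natural numbers. Since $I(N) > 6$ for every $N \ge 6$, this reduces the problem to the five levels $N \in \{1,2,3,4,5\}$. For each of these the formula collapses to an explicit piecewise-linear expression in $k$, from which one reads off surjectivity for $N \le 4$ and the failure at $N = 5$ (whose sequence takes only values in $\{0\} \cup (2\NN+1)$, omitting every positive even integer).

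For the newspace, I would apply the standard M\"obius-type inversion
\[
\dim S_{2k}^\nw(\Gamma_0(N)) = \sum_{d \mid N} \beta(N/d)\, \dim S_{2k}(\Gamma_0(d)),
\]
where $\beta$ is the multiplicative Dirichlet inverse of the divisor function, satisfying $\beta(p) = -2$, $\beta(p^2) = 1$, and $\beta(p^a) = 0$ for $a \ge 3$. Substituting into the full-space formula shows that the asymptotic slope of $\dim S_{2k}^\nw(\Gamma_0(N))$ is $\phi^\nw(N)/6$, where $\phi^\nw := I \ast \beta$ is multiplicative with $\phi^\nw(p) = p-1$, $\phi^\nw(p^2) = p^2 - p - 1$, and $\phi^\nw(p^a) = p^{a-3}(p-1)^2(p+1)$ for $a \ge 3$. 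The density argument again eliminates every $N$ with $\phi^\nw(N) > 6$, and a short enumeration across prime-power factorizations shows that $\phi^\nw(N) \le 6$ holds for exactly eighteen levels. For each such $N$, inserting the closed-form dimension formulas for the $\dim S_{2k}(\Gamma_0(d))$, $d \mid N$, writes $\dim S_{2k}^\nw(\Gamma_0(N))$ as a single expression that is piecewise linear in $k$ on residue classes modulo $6$ (the lcm of the periods $2$ and $3$ arising from the floor terms), and surjectivity can be decided by inspecting those six pieces.

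The main obstacle is the bookkeeping in the newspace step. Among the eighteen candidate levels, the ten failures (namely $N \in \{5, 6, 7, 9, 10, 14, 20, 24, 28, 36\}$) each miss some specific small natural number---most often $2$---because their periodic corrections trap the sequence in a proper arithmetic progression; on the other hand the eight surjective levels $\{1,2,3,4,8,12,16,18\}$ seem to admit no clean arithmetic characterization that avoids case-by-case verification. Carrying out that verification, while routine, is the most delicate part of the argument, since the relevant piecewise-linear formula depends on the elliptic-point and cusp data of each individual divisor of $N$.
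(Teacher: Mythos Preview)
Your approach is correct and broadly parallel to the paper's, but the first reduction is genuinely different. The paper reduces to finitely many $N$ by invoking precomputed tables of all $(N,k)$ with $\dim S_{2k}(N)=0$ (respectively $\dim S_{2k}^\nw(N)=0$): if $0$ is never attained, the sequence certainly misses a natural number. You instead use the multiset density, observing that the linear slope $\psi(N)/6$ (respectively $\psi'(N)/6$) forces omissions whenever it exceeds $1$. Your reduction is cleaner---for the full space it leaves only $N\in\{1,2,3,4,5\}$ rather than the paper's fifteen candidates---and, amusingly, it is exactly the method the paper itself adopts later for the sign-pattern spaces in its Lemma on multiset densities. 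So you have in effect unified the two arguments.

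For the second reduction both you and the paper exploit that the correction term is periodic in $k$; the paper formalises this in a lemma (working modulo $12$ to cover the trace-formula case as well), while your period-$6$ piecewise-linear description is equivalent for the dimension alone. Two small points of care: your $\beta$ has $\beta(p)=-2,\ \beta(p^2)=1$, which is the standard Dirichlet inverse of the divisor function and matches the paper's $\psi'$; and the $k=1$ aberration (the $\delta_{k,1}$, or equivalently the $\mu(N)$ term in the newspace) must be treated separately when you ``inspect those six pieces'', since it breaks the periodicity at that one index. You flag this for the full space but should make it explicit in the newspace verification too.
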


Finally, we consider the corresponding question on the sign pattern spaces $S_{2k}^\sigma(N)$ and $S_{2k}^{\nw,\sigma}(N)$. 
Recall that for squarefree $N$, certain operators are defined on $S_{2k}(N)$; the Atkin-Lehner involutions $W_p$ for $p \mid N$. Since the $W_p$ are involutions, they can only possibly have eigenvalues of $\pm 1$. Additionally, the $\{W_p\}_{p \mid N}$ turn out to be a commuting family of self-adjoint operators. Hence, they simultaneously diagonalize $S_{2k}(N)$, and so $S_{2k}(N)$ can be written as a direct sum of the simultaneous eigenspaces of the $W_p$. Define the sign patterns $\sigma$ for $N$ to be the nonzero multiplicative functions on the divisors of $N$ such that $\sigma(p) \in \{\pm 1\}$ for each $p \mid N$.
Then
$$
S_{2k}(N) = \bigoplus_\sigma S_{2k}^\sigma(N), \quad \text{where} \ \ S_{2k}^\sigma(N) = \{f \in S_{2k}(N) \ \colon \ W_p f = \sigma(p) f \ \ \text{for } p \mid N \}.
$$
In other words, the sign pattern spaces $S_{2k}^\sigma(N)$ are defined by specifying the sign of the $W_p$ eigenvalues for each $p \mid N$.
One can also define the sign pattern newspaces $S_{2k}^{\nw,\sigma}(N)$ in the same way \cite[Section 3]{kimball}. 

 % We note that the sign pattern spaces are a finer decomposition of the more well-known plus and minus spaces $S_{2k}^{\pm}(N)$, which only specify the sign of the $W_N$ eigenvalue. Recall that here, for $M \mid N$ not prime, the $W_M$ operators are defined multiplicatively; $W_M := \prod_{p \mid M} W_p$.

% $S_{2k}(N)$ have  has canonical basis consisting of simultaneous eigenforms for the Hecke operators $T_m$. Additionally, for squarefree $N$, these Hecke operators $T_m$ commute with the Atkin-Lehner involutions $W_p$ for primes $p \mid N$. This means that for each eigenform $f$, $W_p f = \pm f$. One can then obtain the sign pattern spaces by specifying the signs of these $W_p$ eigenvalues for each $p \mid N$. 
% Define a sign pattern $\sigma$ for $N$ to be a nonzero multiplicative function on the divisors of $N$ such that $\sigma(p) = \pm 1$ for each $p \mid N$.
% Then the sign pattern space $S_{2k}^\sigma(N)$ is defined to be the subspace of $S_{2k}(N)$ generated by the eigenforms $f$ with $W_p$ eigenvalue $\sigma(p)$ for each $p \mid N$ \cite[Section 3]{kimball}. 

We will study the weight-indexed dimension sequences $\{ \dim S_{2k}^\sigma(N) \}_{k\ge1}$ for these sign pattern spaces. (Note that level-indexed sequences would not make sense here since the sign pattern depends on the level.) 
Of course there are many different decompositions of $S_{2k}(N)$ and $S_{2k}^\nw(N)$ we could have considered. We chose to study the sign pattern decomposition in particular because this is, in some sense, the finest possible way to decompose $S_{2k}^\nw(N)$ in a natural way (i.e. that respects the Hecke operators and Galois conjugation) \cite[Section 5]{chow-ghitza}. This gives the dimension sequences for these spaces the ``best chance" to take on every natural number. 

For $N$ and $\sigma$ fixed, we classify precisely when the dimension sequences for the sign pattern spaces take on all natural numbers. For shorthand, we will write a sign pattern $\sigma$ for $N = p_1 \cdots p_t$ (with $p_1 < \ldots < p_t$) as the sequence of signs $\sigma(p_1), \ldots, \sigma(p_t)$. For example, the sign pattern $\sigma$ for $N=30$ mapping $2 \mapsto -1$; $3 \mapsto +1$; $5 \mapsto +1$ would be written just as $\sigma = -++$.
\begin{theorem} \label{thm:k-indexed-sgnpatt}
    For fixed squarefree $N \ge 1$ with sign pattern $\sigma$, the sequence $\{ \dim S_{2k}^\sigma(N) \}_{k\ge1}$ 
    takes on every natural number iff $N \in \{1,2,3,5,6,7,10,11,14,15\}$ and $(N,\sigma) \ne (10,+-)$. 
    Similarly, the sequence $\{ \dim S_{2k}^{\nw,\sigma}(N) \}_{k\ge1}$ takes on every natural number iff 
$N \in \{
1,\allowbreak 2,\allowbreak 3,\allowbreak 5,\allowbreak 6,\allowbreak 7,\allowbreak 10,\allowbreak 14,\allowbreak 15,\allowbreak 21,\allowbreak 26,\allowbreak 30,\allowbreak 42,\allowbreak 66,\allowbreak 70,\allowbreak 78,\allowbreak 210
\}$ or 
$(N,\sigma)$ is in the list given in Table \ref{table:sporadic-cases}. 
\end{theorem}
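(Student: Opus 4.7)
My approach parallels the proofs of Theorems~\ref{thm:N-indexed} and~\ref{thm:k-indexed}. Fix a squarefree $N$ and a sign pattern $\sigma$. The strategy is to make the growth of $\dim S_{2k}^\sigma(N)$ in $k$ explicit, identify its leading slope, and split into a reduction step (which cuts down to finitely many $N$) followed by an explicit verification step.

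I would begin by using
$$
\dim S_{2k}^\sigma(N) \;=\; \frac{1}{2^{\omega(N)}}\sum_{d\mid N}\sigma(d)\,\Tr\!\bigl(W_d\mid S_{2k}(N)\bigr)
$$
together with the explicit trace formula of Yamauchi (or Martin) for $\Tr(W_d\mid S_{2k}(N))$, to write
$$
\dim S_{2k}^\sigma(N) \;=\; a_N\,(k-1) \;+\; \phi_{N,\sigma}(k),
$$
where the leading coefficient $a_N = [\Sl_2(\ZZ):\Gamma_0(N)]/(6\cdot 2^{\omega(N)})$ is independent of $\sigma$ (because $\Tr(W_d\mid S_{2k}(N))$ for $d>1$ is bounded in $k$), and $\phi_{N,\sigma}(k)$ is a bounded correction, periodic in $k$ with period dividing $6$. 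For the newspace case, combining this with M\"obius-type inversion over divisors of $N$ yields an analogous
$$
\dim S_{2k}^{\nw,\sigma}(N) \;=\; \widetilde a_{N,\sigma}\,(k-1) \;+\; \widetilde\phi_{N,\sigma}(k),
$$
in which the effective slope $\widetilde a_{N,\sigma}$ may be much smaller than $a_N$ due to cancellation and can now genuinely depend on $\sigma$.

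Whenever the leading slope exceeds $1$, the consecutive differences eventually exceed $1$ and the periodic correction cannot fill in the gaps, so the sequence omits infinitely many natural numbers. Computing $a_N$ for all squarefree $N$ singles out $\{1,2,3,5,6,7,10,11,14,15\}$ as exactly the values with $a_N\le 1$; carrying out the analogous slope analysis for $\widetilde a_{N,\sigma}$ in the newspace setting yields the larger finite list of candidate $N$ appearing in the theorem. For each surviving $(N,\sigma)$, I would then compute $\phi_{N,\sigma}(k)$ (resp.\ $\widetilde\phi_{N,\sigma}(k)$) explicitly on each residue class of $k$ modulo the period and read off the set of realized natural numbers. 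This identifies the anomalous pair $(10,+-)$ in the full case, and the sporadic list in Table~\ref{table:sporadic-cases} for the newspace case, as exactly those $(N,\sigma)$ for which some specific small natural number is missed even though the slope is at most $1$.

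The main obstacle is the verification step in the newspace setting: after the M\"obius cancellation, squarefree $N$'s with three and four prime factors (up to $N=210$) survive as candidates, and for each of these the periodic offset $\widetilde\phi_{N,\sigma}$ depends nontrivially on the choice among the $2^{\omega(N)}$ sign patterns. Enumerating these cases and determining precisely which small natural numbers are omitted is finite but delicate; it is here, rather than in the slope estimate, that the sporadic exceptions in Table~\ref{table:sporadic-cases} are produced and that the boundary cases with $\widetilde a_{N,\sigma}$ exactly equal to $1$ must be decided individually.
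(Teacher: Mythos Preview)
Your overall strategy matches the paper's: extract the linear-in-$k$ term from the trace formula, use the slope (equivalently, the multiset density) to reduce to finitely many $N$, and then exploit the periodicity of the bounded remainder to turn the remaining verification into a finite check. That skeleton is exactly right.

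However, there is a genuine conceptual error in your treatment of the newspace. You claim that after M\"obius-type inversion the slope $\widetilde a_{N,\sigma}$ ``can now genuinely depend on $\sigma$'' and becomes small ``due to cancellation.'' This is false. For $d>1$, $\Tr_{S_{2k}^\nw(N)} W_d$ is still bounded in $k$, so the leading term of $\dim S_{2k}^{\nw,\sigma}(N)$ comes solely from the $d=1$ summand $\dim S_{2k}^\nw(N)$, which has slope $\psi'(N)/6$. Hence the newspace slope is $\psi'(N)/(6\cdot 2^{\omega(N)})$, \emph{independent of} $\sigma$. The reason a larger list of $N$ survives in the new case is simply that $\psi'(p)=p-1<p+1=\psi(p)$, not any $\sigma$-dependent cancellation. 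In particular, the candidate list is a set of $34$ values of $N$ (e.g.\ $11,13,22,33,\ldots,390$ all appear), strictly larger than the list in the theorem statement; the theorem only records those $N$ that survive the subsequent verification for every $\sigma$.

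You have also inverted the meaning of Table~\ref{table:sporadic-cases}. Those $(N,\sigma)$ are not the pairs where ``some specific small natural number is missed''; they are the pairs where the sequence \emph{does} hit every natural number, but some other sign pattern at the same $N$ fails (so $N$ does not make the unconditional list). The exceptional pair $(10,+-)$ in the full case is indeed a failure, but the sporadic table plays the opposite role. Finally, a minor point: the periodic correction has period dividing $12$, not $6$ --- the Atkin--Lehner traces for $d>1$ are $12$-periodic in $k$ (Lemma~\ref{lem:tr-W-d-periodic}), and only the $d=1$ term has period $6$.
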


Finally, we point out an interesting phenomenon that we observed regarding a closely related question.
One could ask about when a dimension sequence takes on every natural number exactly once. 
It turns out that for the full sign pattern spaces, whether or not
a dimension sequence satisfies this property 
depends only on its multiset density.

Given a multiset $A$ of natural numbers, the multiset density of $A$ is defined to be
$$
\rho(A) := \lim_{x \to \infty} \frac{ \#\{a \in A \colon a \le x\} }{ x },
$$
assuming this limit exists.
We note that this definition only differs from the typical notion of asymptotic density in that we are considering $A$ here to be a multiset instead of a set (i.e., we count the elements of $A$ with multiplicity). For our purposes, this will be more useful to consider than 
the typical asymptotic density since we are interested in
sequences which may have repeated values.
This also means that for sequences, the multiset density can be thought of describing a more general notion of the linear growth rate of a sequence. In the simplest case, a sequence $\{a_k\}_{k\ge1}$ of the form $a_k = \alpha k + o(k)$ will have multiset density  $1/\alpha$.

Then we have the following result, which follows from Theorem \ref{thm:k-indexed-sgnpatt}.
\begin{corollary} \label{cor:k-indexed-sgnpatt}
For fixed squarefree $N \ge 1$ with sign pattern $\sigma$, the sequence $\{ \dim S_{2k}^\sigma(N) \}_{k\ge1}$ takes on every natural number exactly once iff it has multiset density $\rho = 1$.
\end{corollary}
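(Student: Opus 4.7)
The forward direction is immediate: if $\{\dim S_{2k}^\sigma(N)\}_{k\ge 1}$ realizes every natural number exactly once, then $\#\{k \ge 1 : \dim S_{2k}^\sigma(N) \le x\} = \lfloor x \rfloor + 1$ for every $x \ge 0$, so $\rho = 1$.

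For the converse, I would leverage the explicit dimension formulas produced in the proof of Theorem~\ref{thm:k-indexed-sgnpatt}. Writing $t = \omega(N)$, the $\sigma$-projection identity
\[
    \dim S_{2k}^\sigma(N) = \frac{1}{2^t} \sum_{d \mid N} \sigma(d) \, \Tr\!\bigl(W_d \mid S_{2k}(N)\bigr),
\]
combined with the standard fact that $\Tr(W_d \mid S_{2k}(N)) = O(1)$ in $k$ for each $d > 1$ dividing the squarefree $N$ (an Eichler--Selberg consequence), shows that $\dim S_{2k}^\sigma(N) = \alpha k + \psi(k)$ for all sufficiently large $k$, where $\alpha = \tfrac{1}{6}\prod_{p \mid N}\tfrac{p+1}{2}$ and $\psi$ is bounded and quasi-periodic. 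Hence the multiset density exists and equals $1/\alpha$, and the condition $\rho = 1$ reduces to $\prod_{p \mid N}\tfrac{p+1}{2} = 6$. An elementary enumeration over primes shows this equation has exactly three squarefree solutions: $N \in \{11, 14, 15\}$, arising respectively from the factorizations $6$, $\tfrac{3}{2}\cdot 4$, and $2 \cdot 3$.

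All sign patterns for these three levels appear on the list of Theorem~\ref{thm:k-indexed-sgnpatt}, so the sequences hit every natural number at least once; it remains only to upgrade this to \emph{exactly once}. I would do so by direct inspection of the ten pairs $(N,\sigma)$ with $N \in \{11, 14, 15\}$ (two sign patterns for $N = 11$, four each for $N = 14,15$). For each, the formulas of Theorem~\ref{thm:k-indexed-sgnpatt} pin down the quasi-periodic shift $\psi$ and the initial segment precisely; slope exactly~$1$, together with the hypothesis that no natural number is skipped, forces the linear tail to traverse $k+c,\, k+1+c,\, k+2+c, \ldots$ without collisions (any periodic fluctuation of $\psi$ would produce both repeats and skips, but skips are ruled out by Theorem~\ref{thm:k-indexed-sgnpatt}). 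A finite check at small $k$ then confirms that the early terms fill in the remaining natural numbers bijectively.

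The main obstacle is the final verification across these ten pairs: although each is elementary given the dimension formulas already in hand, the bookkeeping constitutes the bulk of the work, and one must also cite carefully the $O(1)$ bound on $\Tr(W_d \mid S_{2k}(N))$ for $d > 1$ squarefree. The essential conceptual content, however, is the slope calculation, which cleanly reduces the problem from the full list of Theorem~\ref{thm:k-indexed-sgnpatt} down to the three squarefree levels on which $\alpha = 1$.
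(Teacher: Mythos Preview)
Your approach is essentially the paper's: both compute the density, reduce $\rho=1$ to $N\in\{11,14,15\}$, and then handle the resulting ten pairs $(N,\sigma)$ via the periodic-in-$k$ structure of the sequences. The paper at that point simply writes out all ten sequences explicitly (each turns out to be $k-1$, $k-1\pm(-1)^k$, or $k-1+(-1)^k+\delta_{k,1}$) and inspects them by eye.

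Your attempt to shortcut this explicit listing contains a misstatement. You assert that ``any periodic fluctuation of $\psi$ would produce both repeats and skips,'' and conclude that the tail is literally $k+c,\,k+1+c,\,k+2+c,\ldots$. This is false: for instance $\dim S_{2k}^{+}(11)=k-1+(-1)^k$ for $k\ge 2$, so $\psi$ genuinely oscillates, yet the tail $2,1,4,3,6,5,\ldots$ has neither repeats nor skips. The correct lemma is that for a sequence $k+\psi(k)$ with $\psi$ of period $P$, the tail has a repeat if and only if it has a skip (both being equivalent to the $P$ values in one period failing to form a complete residue system mod $P$). With that correction, your appeal to Theorem~\ref{thm:k-indexed-sgnpatt} to rule out skips does indeed rule out repeats in the periodic tail, leaving only the aberrant $k=1$ term to check by hand---which you do acknowledge. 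So the strategy is salvageable and in fact slightly more conceptual than the paper's brute-force listing, but as written the key step is misstated.
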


Because of the classification given by Theorem \ref{thm:k-indexed-sgnpatt}, this result is not all that difficult to prove.
However, it is quite surprising. 
Nothing close to this result holds for the other spaces we investigated; this structure emerges only for 
the full sign pattern dimension sequences $\{ \dim S_{2k}^\sigma(N) \}_{k\ge1}$.

This pattern is also somewhat surprising given the manner in which the 
dimension sequences in question satisfy the desired property. 
For $(N,\sigma)=(14,-+)$, the dimension sequence takes on every natural 
number exactly once in the most obvious way:\\
$\{ \dim S_{2k}^{-+}(14) \}_{k\ge1} = \{0,1,2,3,4,5,6,...\}$.
However, for other values of $(N,\sigma)$, the dimension sequences take on each natural number exactly once in non-trivial ways, such as
$\{ \dim S_{2k}^+(11) \}_{k\ge1}  = \{0,2,1,4,3,6,5,8...\}$ and  $\{ \dim S_{2k}^{+-}(15) \}_{k\ge1}  = \{1,0,3,2,5,4,7,6,...\}$.

\section{Dimension Formulas}
In this section, we state several dimension formulas for the relevant spaces of modular forms. In each case, we will write the formulas in their most explicit forms (at the loss of some elegance) in order to make for easier bounding arguments later. 

First, we give a dimension formula for the full space $S_{2k}(N)$. 
\begin{formula}[{\cite[Proposition 12]{martin}}] \label{form:fullspace}
    Let $N,k \ge 1$. Then
    \begin{align}
        \dim S_{2k}(N) = \frac{2k-1}{12} \psi(N) - \frac12 \nu_\infty(N) + c_2(k) \nu_2(N) + c_3(k) \nu_3(N) + \delta_{k,1},
    \end{align}
    where
    \begin{itemize}
        \item $c_2(k) = \lrfloor{\frac{2k}{4}} - \frac{2k-1}{4}$;
        \item $c_3(k) = \lrfloor{\frac{2k}{3}} - \frac{2k-1}{3}$;
        \item $\delta_{k,1}$ denotes the Kronecker delta;
        \item $\psi$ is the multiplicative function $\psi(N) = N \prod_{p \mid N} \lrp{1+ \frac1p}$;
        \item $\nu_\infty$ is the multiplicative function satisfying:
        \begin{itemize}
            \item $\nu_\infty(p^r) = 2 p^{(r-1)/2}$ for $r$ odd,
            \item $\nu_\infty(p^r) = (p+1)p^{r/2-1}$ for $r$ even;
        \end{itemize}
        \item $\nu_2$ is the multiplicative function satisfying:
        \begin{itemize}
            \item $\nu_2(2) = 1$ and $\nu_2(2^r) = 0$ for $r \ge 2$,
            \item $\nu_2(p^r) = 2$ when $p \equiv 1 \mod 4$,
            \item $\nu_2(p^r) = 0$ when $p \equiv 3 \mod 4$;
        \end{itemize}
        \item $\nu_3$ is the multiplicative function satisfying:
        \begin{itemize}
            \item $\nu_3(3) = 1$ and $\nu_3(3^r) = 0$ for $r \ge 2$,
            \item $\nu_3(p^r) = 2$ when $p \equiv 1 \mod 3$,
            \item $\nu_3(p^r) = 0$ when $p \equiv 2 \mod 3$.
        \end{itemize}
    \end{itemize}
\end{formula}

We also give an explicit dimension formula for the newspace $S_{2k}^\nw(N)$. 
Let $\beta$ be the multiplicative function satisfying $\beta(p) = 2$, $\beta(p^2) = -1$, and $\beta(p^r)=0$ for $r \ge 3$. Then the dimension of $S_{2k}^\nw(N)$ can be obtained as a convolution of $\beta$ with $\dim S_{2k}(N)$ \cite[Corollary 13.3.7]{cohen-stromberg}:
\begin{align}
    \dim S_{2k}^\nw(N) = \sum_{M \mid N} \beta(N/M) \dim S_{2k}(M).
\end{align}
Computing this convolution using Formula \ref{form:fullspace} yields the following explicit dimension formula for the newspace.
\begin{formula}[{\cite[Theorem 1]{martin}}] \label{form:newspace}
    Let $N,k \ge 1$. Then
    \begin{align}
        \dim S_{2k}^\nw(N) = \frac{2k-1}{12} \psi'(N) - \frac12 \nu_\infty'(N) + c_2(k) \nu'_2(N) + c_3 \nu'_3(N) + \delta_{k,1} \mu(N),
    \end{align}
    where
    \begin{itemize}
        \item $c_2(k)$, $c_3(k)$, and $ \delta_{k,1}$ are as defined in Formula \ref{form:fullspace};
        \item $\mu(N)$ is the Mobius function; 
        \item $\psi'$ is the multiplicative function satisfying:
        \begin{itemize}
            \item $\psi'(p) = p-1$, $\psi'(p^2) = p^2-p-1$, 
            and $\psi'(p^r) = \lrp{p^2-1}\lrp{p-1} p^{r-3}$ for $r \ge 3$;
        \end{itemize}
        \item $\nu'_\infty$ is the multiplicative function satisfying:
        \begin{itemize}
            \item $\nu'_\infty(p^r) = 0$ for $r$ odd,
            \item $\nu'_\infty(p^2) = p-2$ and $\nu'_\infty(p^r) = (p-1)^2 p^{r/2-2} $ for $r \ge 4$ even;
        \end{itemize}
        \item $\nu'_2$ is the multiplicative function satisfying:
        \begin{itemize}
            \item $\nu'_2(2) = -1$, $\nu_2(4) = -1$, $\nu'_2(8)=1$, and $\nu'_2(2^r) = 0$ for $r \ge 4$,
            \item $\nu'_2(p)=0$, $\nu'_2(p^2) = -1$, and $\nu'_2(p^r)=0$ for $r \ge 3$ when $p \equiv 1 \mod 4$,
            \item $\nu'_2(p)=-2$, $\nu'_2(p^2) = 1$, and $\nu'_2(p^r)=0$ for $r \ge 3$ when $p \equiv 3 \mod 4$;
        \end{itemize}
        \item $\nu'_3$ is the multiplicative function satisfying:
        \begin{itemize}
            \item $\nu'_3(3) = -1$, $\nu_3(9) = -1$, $\nu'_3(27)=1$, and $\nu'_3(3^r) = 0$ for $r \ge 4$,
            \item $\nu'_3(p)=0$, $\nu'_3(p^2) = -1$, and $\nu'_3(p^r)=0$ for $r \ge 3$ when $p \equiv 1 \mod 3$,
            \item $\nu'_3(p)=-2$, $\nu'_3(p^2) = 1$, and $\nu'_3(p^r)=0$ for $r \ge 3$ when $p \equiv 2 \mod 3$.
        \end{itemize}
    \end{itemize}
\end{formula}

Next, we give a formula to compute the dimension of the sign pattern spaces. 
For squarefree $N$, consider the trace of the Atkin-Lehner operators $\Tr W_p$ over $S_{2k}(N)$. Since the $W_p$ are simultaneously diagonalizable with eigenvalues $\pm 1$; $\Tr W_p$ gives the number of basis elements with positive $W_p$ eigenvalue, minus the number of basis elements with negative $W_p$ eigenvalue. Hence the space determined by requiring the $W_p$ eigenvalue to be $\pm1$ will have dimension $\frac12 \lrp{\Tr W_1 \pm \Tr W_p}$. ($W_1$ here is the identity operator, so $\Tr W_1 = \dim S_{2k}(N)$.) By an inclusion/exclusion argument, this idea can be extended to yield the following dimension formula for the sign pattern spaces. In the following, for $d \mid N$ not prime, the $W_d$ operators are defined multiplicatively; $W_d := \prod_{p \mid d} W_p$.
\begin{formula}[{\cite[Proposition 3.2]{kimball}}] \label{form:dim-sgnpatt-convolution}
    Let $k \ge 1$, $N \ge 1$ be squarefree, $\sigma$ be a sign pattern for $N$, and $\omega(N)$ denote the number of distinct prime divisors of $N$. Then
    \begin{align}
        \dim S_{2k}^\sigma(N) = 2^{- \omega(N)}\sum_{d \mid N} \sigma(d) \Tr W_d
    \end{align}
    The same formula also holds for the newspace $S_{2k}^{\nw,\sigma}(N)$.
\end{formula}

This formula reduces the problem of computing $\dim S_{2k}^\sigma(N)$ and $\dim S_{2k}^{\nw,\sigma}(N)$ to computing the trace of the Atkin-Lehner operators $W_d$. Explicit formulas for these traces are known \cite{yamauchi}, \cite{kimball}.
However, to avoid introducing a lot of unnecessary notation, we forgo repeating the precise trace formulas here. We will only use the following property, which follows from \cite[Equation 1.6, Proposition 1.2]{kimball}. 
\begin{lemma} \label{lem:tr-W-d-periodic}
    Fix $N \ge 1$ squarefree, and $1 \ne d \mid N$. Then for $k \ge 2$, $\Tr_{S_{2k}(N)} W_d$ and $\Tr_{S_{2k}^\nw(N)} W_d$ are $12$-periodic in $k$.
\end{lemma}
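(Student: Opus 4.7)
The plan is to read off the $12$-periodicity directly from the explicit Atkin-Lehner trace formula of Kimball (the cited Equation 1.6 and Proposition 1.2). Structurally, $\Tr_{S_{2k}(N)} W_d$ has the same shape as Formula \ref{form:fullspace} for $\dim S_{2k}(N) = \Tr W_1$, except that the linear-in-$k$ main term $(2k-1)\psi(N)/12$ is absent. This is because that term arises from the identity contribution in the relevant group-theoretic sum, which vanishes for $W_d$ with $d \ne 1$. What remains are pieces analogous to $-\tfrac{1}{2}\nu_\infty(N)$ (constant in $k$), $c_2(k)\nu_2(N)$ and $c_3(k)\nu_3(N)$ (periodic in $k$ with periods dividing $2$ and $3$ respectively), and $\delta_{k,1}$ (which vanishes for $k \ge 2$), together with additional elliptic contributions arising from the twist by $W_d$.

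Each additional elliptic contribution carries a weight-dependent factor of the form $\tfrac{\rho^{2k-1} - \bar\rho^{2k-1}}{\rho - \bar\rho}$, where $\rho$ is a root of a quadratic polynomial determined by the Atkin-Lehner action. The key observation is that, with the standard normalization making $W_d$ a genuine involution on $S_{2k}(N)$, the allowed $\rho$ are roots of unity of order dividing $24$. Hence each such factor is $12$-periodic in $k$. Combined with the period-$2$ and period-$3$ contributions from the $c_2(k), c_3(k)$ analogs, the overall trace is a finite sum of periodic functions of $k$ whose periods all divide $12$, yielding the full-space statement.

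For the newspace, the plan is to use the Möbius-style inversion expressing $\Tr_{S_{2k}^\nw(N)} W_d$ as a finite linear combination of $\Tr_{S_{2k}(M)} W_d$ over $d \mid M \mid N$ (the Atkin-Lehner analog of Formula \ref{form:newspace}, also given in Kimball's paper, and derived exactly as Formula \ref{form:newspace} is derived from Formula \ref{form:fullspace} via the multiplicative function $\beta$). Since each summand is $12$-periodic in $k$ for $k \ge 2$ by the full-space case applied at level $M$, so is $\Tr_{S_{2k}^\nw(N)} W_d$.

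The main obstacle is verifying the root-of-unity structure of the elliptic contributions in Kimball's formula. Concretely, one must confirm that the Atkin-Lehner normalization $W_d^2 = \text{Id}$ cancels the apparent $d^{(2k-1)/2}$ growth that would otherwise be present for roots $\rho$ of $x^2 - tx + d$, leaving a periodic factor of unit modulus. Once this cancellation is in hand, $12$-periodicity is immediate from the short list of allowed roots of unity (orders $1, 2, 3, 4, 6, 12$) dictated by the integer constraints on the characteristic polynomials appearing in the sum.
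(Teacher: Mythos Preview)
Your proposal is correct and follows the same approach the paper (implicitly) takes: the paper gives no argument beyond the citation to Kimball's Equation~1.6 and Proposition~1.2, so ``read the explicit formula and observe that every $k$-dependent term is periodic of period dividing $12$'' is exactly what is intended. Your group-theoretic explanation for \emph{why} the elliptic terms are periodic---namely that $(g/\sqrt{d})^2 \in \Gamma_0(N) \subset \Sl_2(\ZZ)$ for any $g$ in the coset $\Gamma_0(N)W_d$, forcing the normalized eigenvalues to be roots of unity of bounded order---is a sound elaboration; just note that order $8$ can also occur (e.g.\ when $d=2$, since then $\Tr(g)=\pm\sqrt{2d}$ can be an integer), so the correct list is $\{1,2,3,4,6,8,12\}$, all dividing $24$, and the $12$-periodicity in $k$ still follows.
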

We note here that the general trace formula has an aberration at $k=1$, which is why the above periodicity result is only stated for $k \ge 2$.

\section{Level indexed dimension sequences}

In this section, we prove Theorem \ref{thm:N-indexed}, showing that each of the sequences $\{ \dim S_{2k}(N) \}_{N\ge1}$ and $\{\dim S_{2k}^\nw(N) \}_{N\ge1}$ omit some natural number.

To accomplish this, we will need explicit bounds on certain multiplicative functions which appear in the dimension formulas from the last section. We use the method from \cite[Lemma 2.4]{ross} to obtain these explicit bounds.
\begin{lemma} \label{lemma:mult-bound}
    Let $\omega(N)$ denote the number of distinct prime divisors of $N$, and let 
    $ \displaystyle
        \pi(N) := \prod_{p\mid N} 
            \frac{p^2}{p^2-p-1} 
    $. 
    Then $
        2^{\omega(N)} \leq 4.862 \, N^{1/4} 
        $
        and 
        $
        \pi(N) \le 9.930 \, N^{1/32}
        $.
\end{lemma}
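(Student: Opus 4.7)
Both bounds reduce to the same multiplicativity argument: divide the left-hand side by the claimed $N^{\alpha}$ factor to obtain a multiplicative function of $N$, maximize each local factor at $p^r$ separately, and identify the finite set of primes where this local factor exceeds $1$. The overall maximum is then the product over these ``bad'' primes, which is evaluated numerically.

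For the first bound, I would write
\[
\frac{2^{\omega(N)}}{N^{1/4}} \;=\; \prod_{p^r \parallel N} \frac{2}{p^{r/4}}.
\]
Since $p \ge 2$, each local factor $2/p^{r/4}$ is non-increasing in $r$, so it is maximized at $r = 1$. A direct numerical check shows $2/p^{1/4} > 1$ precisely for $p \in \{2,3,5,7,11,13\}$, with $p = 17$ being the first prime at which the factor drops below $1$. Hence the supremum of $2^{\omega(N)}/N^{1/4}$ over all positive integers $N$ is attained at $N = 2 \cdot 3 \cdot 5 \cdot 7 \cdot 11 \cdot 13 = 30030$, and evaluating the finite product over these six primes gives the bound $4.862$.

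The second bound follows from the same template applied to $\pi(N)/N^{1/32}$. Because $\pi$ depends only on the squarefree radical of $N$, I drop the exponent $r$ outright and take as local factor $\big(p^2/(p^2-p-1)\big)\big/p^{1/32}$. The same kind of check shows this exceeds $1$ exactly for $p \in \{2,3,5,7,11,13\}$ (with $p = 17$ again just failing), so the extremal $N$ is once more $30030$ and the six-factor product yields $9.930$.

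The only step requiring genuine attention is justifying the cutoff at $p = 13$, i.e., ruling out \emph{every} prime $p \ge 17$ rather than merely $p = 17$. This reduces to monotonicity in $p$: both $2/p^{1/4}$ and $\big(p^2/(p^2-p-1)\big)\big/p^{1/32}$ are decreasing functions of $p$ (an easy calculus check for the second: the logarithmic derivative simplifies to $-(p+2)/\big(p(p^2-p-1)\big) - 1/(32p) < 0$), so verifying the inequality at a single prime past the cutoff suffices. Everything else is routine numerical arithmetic.
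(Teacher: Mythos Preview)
Your argument is correct and follows essentially the same approach as the paper: factor the ratio $2^{\omega(N)}/N^{1/4}$ (resp.\ $\pi(N)/N^{1/32}$) multiplicatively, observe that each local factor is bounded by $1$ for primes $p\ge 17$, and take the finite product over $p\in\{2,3,5,7,11,13\}$ to obtain the constants $4.862$ and $9.930$. You supply a bit more justification than the paper does (the explicit monotonicity check via the logarithmic derivative, and the reduction to $r=1$), but the method is the same.
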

\begin{proof}
    First, note that $2 \le p^{1/4}$ for primes $p \ge 17$. So let $c_p = 2 / p^{1/4}$ for primes $2 \le p \le 13$, and $c_p = 1$ for $p \ge 17$. Then
    \begin{align}
        2^{\omega(N)} = \prod_{p \mid N} 2 \le \prod_{p \mid N} c_p p^{1/4} \le c_2 c_3 \cdots c_{13} \prod_{p \mid N} p^{1/4} \le 4.862\, N^{1/4}.
    \end{align}
    Similarly, note that $\frac{p^2}{p^2 - p - 1} \le p^{1/32}$ for primes $p \ge 17$. So let $c'_p = \frac{p^2}{p^2 - p - 1} / p^{1/32}$ for primes $2 \le p \le 13$, and $c'_p = 1$ for $p \ge 17$.
    Then
    \begin{align}
        \pi(N) = \prod_{p \mid N} \frac{p^2}{p^2 - p - 1} \le \prod_{p \mid N} c'_p p^{1/32} \le c'_2 c'_3 \cdots c'_{13} \prod_{p \mid N} p^{1/32} \le 9.930\, N^{1/32},
    \end{align}
    as desired.
\end{proof}

Now, we are ready to prove Theorem \ref{thm:N-indexed}.
{
\renewcommand{\thetheorem}{\ref{thm:N-indexed}}
\begin{theorem}
    For each fixed $k \ge 1$, the sequences $\{ \dim S_{2k}(N) \}_{N\ge1}$ and $\{\dim S_{2k}^\nw(N) \}_{N\ge1}$ both omit some natural number.
\end{theorem}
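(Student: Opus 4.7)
The plan is a counting argument driven by an explicit lower bound on $\dim S_{2k}(N)$. Combining Formula~\ref{form:fullspace} with the elementary estimates $\nu_\infty(N) \le 2^{\omega(N)}\sqrt{N}$ and $\nu_2(N), \nu_3(N) \le 2^{\omega(N)}$, the bound $2^{\omega(N)} \le 4.862\,N^{1/4}$ from Lemma~\ref{lemma:mult-bound}, and the trivial $\psi(N) \ge N$, I would first establish an effective estimate of the form
\[
\dim S_{2k}(N) \;\ge\; \tfrac{2k-1}{12}\,N - C_1\, N^{3/4} - C_2(k)\, N^{1/4} - 1,
\]
for an absolute constant $C_1$ and a $k$-dependent constant $C_2(k)$.

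For $k \ge 7$ the leading slope $\tfrac{2k-1}{12}$ exceeds $1$, so there is an explicit threshold $N_0 = N_0(k)$ with $\dim S_{2k}(N) \ge N_0$ for every $N \ge N_0$. Then $\{\dim S_{2k}(N)\}_{N < N_0}$ is a multiset of at most $N_0 - 1$ natural numbers and cannot cover the $N_0$-element set $\{0, 1, \ldots, N_0 - 1\}$, so some value in that range is omitted; the smallest omitted value is then identified by directly computing $\dim S_{2k}(N)$ for $N < N_0$. For $1 \le k \le 6$ the raw slope is $\le 11/12 < 1$, and I would sharpen the lower bound by splitting $N$ according to $\omega(N)$: restricting to $\omega(N) \ge t$ replaces $\psi(N) \ge N$ by $\psi(N) \ge N\prod_{i=1}^{t}(1 + 1/p_i)$, pushing the effective slope past $1$ once $t$ is large enough for that $k$. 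The finitely many remaining $N$ with $\omega(N) < t$ (i.e., $N = 1$, prime powers, and products of a few small primes) are then treated by direct enumeration. For $k = 1$ one may additionally appeal to the previously known results of Csirik--Wetherell--Zieve and Ross.

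The argument for $\{\dim S_{2k}^\nw(N)\}_{N \ge 1}$ is parallel, using Formula~\ref{form:newspace} in place of Formula~\ref{form:fullspace}. The only new input is a lower bound on the main term $\tfrac{2k-1}{12}\psi'(N)$: a short factor-by-factor check shows $\psi'(N) \ge N/\pi(N)$, and combined with $\pi(N) \le 9.930\,N^{1/32}$ from Lemma~\ref{lemma:mult-bound} this yields $\psi'(N) \ge N^{31/32}/9.930$. Near-linear growth is preserved, and the analogous counting argument goes through with an adjusted threshold.

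The principal obstacle is the small-$k$ regime, particularly $k \in \{1,2,3\}$. Since $\psi(N)/N$ grows only like $\log\log N$ (Mertens-type behavior), restricting to $\omega(N) \ge t$ boosts the effective slope only by an order-$\log t$ factor, so closing the counting either demands rather large thresholds or an auxiliary residue argument exploiting that $c_2(k)\nu_2(N)$ and $c_3(k)\nu_3(N)$ force $\dim S_{2k}(N)$ into restricted residue classes determined by $k \bmod 12$ and the primes dividing $N$ modulo $12$. Most of the work will lie in organizing this case split so that each branch either closes via the sharpened counting bound or reduces to a short explicit tabulation, and in pinning down the first omitted value for each $k$.
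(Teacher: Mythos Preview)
Your pigeonhole argument for $k \ge 7$ is valid, but the paper avoids it with a much simpler observation: since $\dim S_{2k}(N) \ge \dim S_{2k}(1)$ and $\dim S_{2k}(1) > 0$ for every $k \notin \{1,2,3,4,5,7\}$, the value $0$ is already omitted from $\{\dim S_{2k}(N)\}_{N\ge 1}$ for all such $k$. This reduces the full-space case instantly to six values of $k$, and for those the paper uses exactly your lower bound $B_k(N)$ together with a finite computer search to exhibit a specific missing value. The newspace is handled the same way, using the known table of all $(N,k)$ with $\dim S_{2k}^\nw(N)=0$ to reduce to $k \in \{1,\dots,7\}$.

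Your proposed treatment of $1 \le k \le 6$, by contrast, has two genuine errors. First, $\omega(N) \ge t$ does \emph{not} imply $\psi(N) \ge N\prod_{i=1}^t(1+1/p_i)$ with $p_i$ the $i$-th prime: the factor $1+1/p$ is decreasing in $p$, so if the prime divisors of $N$ are all large (say $N = q_1\cdots q_t$ with each $q_i > X$) then $\psi(N)/N$ can be made arbitrarily close to $1$. Second, the set $\{N : \omega(N) < t\}$ is infinite --- it contains all prime powers, all products of two primes, and so on --- so ``direct enumeration'' of that set is impossible. The counting argument therefore cannot be closed for small $k$ along the lines you describe, and the residue-class patching you mention at the end would be a substantially different and longer proof. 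The paper sidesteps the whole difficulty via the ``$0$ is omitted'' reduction above.
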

\addtocounter{theorem}{-1}
}
\begin{proof}
    First, we show the result for $S_{2k}(N)$. Just by excluding the $k$ for which $0 \notin \{ \dim S_{2k}(N) \}_{N\ge1}$, we can reduce to a finite list of $k$ to consider. Recall that $\dim S_{2k}(N) \ge \dim S_{2k}(1)$, and $\dim S_{2k}(1) = 0$ only for $k \in \{1,2,3,4,5,7\}$. We then only need to consider these six remaining cases.

    Observe that in Formula \ref{form:fullspace}, we have $\lrabs{c_2(k)} \le 1/4$ and $\lrabs{c_3(k)} \le 1/3$. Additionally, we have the multiplicative bounds $\nu_\infty(N) \le 2^{\omega(N)} \sqrt{N}$, $\nu_2(N) \le 2^{\omega(N)}$, $\nu_3(N) \le 2^{\omega(N)}$.
    This means that by Formula \ref{form:fullspace},
    \begin{align}
        \dim S_{2k}(N) 
        &\ge \frac{2k-1}{12} \psi(N) - \frac12 \nu_\infty(N) - \frac12 \nu_2(N) - \frac13 \nu_3(N) \\
        &\ge \frac{2k-1}{12} \psi(N) - \frac12 \sqrt{N}2^{\omega(N)} - \frac56 2^{\omega(N)} \\ 
    \end{align}
    Then using the bounds $\psi(N) \ge N$ and $2^{\omega(N)} \le 4.862 \, N^{1/4}$ from Lemma \ref{lemma:mult-bound}, we obtain that
    \begin{align}
        \dim S_{2k}(N) \ge B_k(N) := \frac{2k-1}{12} N - \frac12 4.862\, N^{3/4} - \frac56 4.862\,  N^{1/4}.
    \end{align}
    It is straightforward to verify that (independently of $k$), $B_k(N)$ is increasing for $N \ge 240000$. We can then use $B_k(N)$ to verify by computer that a given dimension sequence omits some natural number. For $N_0 \ge 240000$ and $B \ge 0$; if $B_k(N_0) > B$, and $\dim S_{2k}(N) \ne B$ for each $1 \le N \le N_0$, then we know that the dimension sequence $\{ \dim S_{2k}(N) \}_{N\ge1}$ omits $B$.

    Via this computation, we verify that for $k=1,2,3,4,5,7$, the dimension sequences $\{ \dim S_{2k}(N) \}_{N\ge1}$ omit the values $150,23,2,4,4,1$, respectively \cite{ross-code}. For every other value of $k \ge 1$, $\{ \dim S_{2k}(N) \}_{N\ge1}$ omits $0$.

    Next, we show the result for $S_{2k}^\nw(N)$.
    In \cite[Tables 6.2]{ross}, we computed the complete list of pairs $(N,2k)$ for which $S_{2k}^\nw(N) = 0$. This reduces our problem to only a finite number of values for $k$ that we need to check. From this table, one can see that $\{S_{2k}^\nw(N)\}_{N\ge1}$ contains $0$ only for $k \in \{1,2,3,4,5,6,7\}$. 

    Recall that we defined $ \displaystyle
        \pi(N) := \prod_{p\mid N} 
            \frac{p^2}{p^2-p-1} 
    $ in Lemma \ref{lemma:mult-bound}.  Observe that in Formula \ref{form:newspace}, we have the multiplicative bounds $\psi'(N) \ge N / \pi(N)$, $\nu'_\infty(N) \le \sqrt{N}$, $\nu'_2(N) \le 2^{\omega(N)}$, and $\nu'_3(N) \le 2^{\omega(N)}$. Then using the upper bounds $2^{\omega(N)} \le 4.862 \, N^{1/4}$ and $\pi(N) \le 9.930\,N^{1/32}$, we obtain that
    \begin{align}
        \dim S_{2k}^\nw(N) \ge B'_k(N) := \frac{2k-1}{12} \frac{1}{9.930} N^{31/32} - \frac12 N^{1/2} - \frac56 4.862\, N^{1/4} - 1.
    \end{align}
    It is straightforward to verify that (independently of $k$) $B'_k(N)$ is increasing for $N \ge 4000$.

    We then can use $B_k'(N)$ to verify that for $k=1,2,3,4,5,6,7$,  the dimension sequence $\{ \dim S_{2k}^\nw(N) \}_{N\ge1}$ omits the values $67846,101,31,16,19,7,4$, respectively \cite{ross-code}. For every other value of $k \ge 1$, $\{ \dim S_{2k}(N) \}_{N\ge1}$ omits $0$.
\end{proof}

\section{Weight indexed dimension sequences}
In this section, we classify precisely when the sequences 
$\{ \dim S_{2k}(N) \}_{k\ge1}$ and $\{\dim S_{2k}^\nw(N) \}_{k\ge1}$ take on every natural number.

We will show these results by taking advantage of a very specific structure that the weight-indexed dimension sequences have. 
In particular, this structure reduces our infinite problem into a finite number of things to check. 
One hiccup is that the natural $\dim S_{2k}(N)$ formula for $k \ge 2$ breaks at $k=1$. But it turns out to not be difficult to account for this aberration.
\begin{lemma} \label{lemma:k-periodic}
    Suppose that a sequence $\{\alpha_k\}_{k\ge1}$ of natural numbers is of the form $\displaystyle \alpha_k = \frac{a}{12b} k + c(k)$ for $k \ge 2$, where $a,b \in \NN^+$ and $c(k)$ is a $12$-periodic function. Then $\{\alpha_k\}_{k\ge1}$ takes on every natural number iff $\{\alpha_k\}_{1\le k \le 24b+1}$ takes on every natural number in the range $[0,2a)$.
\end{lemma}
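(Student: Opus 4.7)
The plan is to exploit the arithmetic progression structure that the sequence inherits for $k \geq 2$, and to handle the aberration at $k=1$ separately. Since $c$ is $12$-periodic (and hence $12b$-periodic), a direct computation gives
$$
\alpha_{k_0 + 12b} - \alpha_{k_0} = \frac{a}{12b} \cdot 12b + \bigl(c(k_0+12b) - c(k_0)\bigr) = a
$$
for each $k_0 \in \{2, 3, \ldots, 12b+1\}$, so the subsequence $\{\alpha_{k_0 + 12bj}\}_{j \geq 0}$ is an arithmetic progression with common difference $a$. Thus $\{\alpha_k\}_{k \geq 2}$ decomposes into $12b$ arithmetic progressions of common difference $a$, with $\alpha_1$ as an extra isolated value.

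The key quantitative observation is as follows: if $\alpha_k = v$ for some $k \geq 2$, and we write $k = k_0 + 12bj$ with $k_0 \in \{2, \ldots, 12b+1\}$ and $j \geq 0$, then $\alpha_{k_0} = v - aj$, and $\alpha_{k_0} \geq 0$ (since $\alpha_{k_0} \in \NN$) forces $j \leq v/a$. In particular, every value $v < 2a$ attained by some $\alpha_k$ with $k \geq 2$ must already be attained at some $k \leq 24b+1$.

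The $(\Rightarrow)$ direction is then immediate: if $\{\alpha_k\}_{k \geq 1}$ covers $\NN$, then every $n \in [0, 2a) \cap \NN$ is attained, and by the key observation this attainment occurs either at $k = 1$ or at some $k \in \{2, \ldots, 24b+1\}$.

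For $(\Leftarrow)$, take $n \in \NN$; if $n < 2a$ we are done by hypothesis. Otherwise write $n = r + ja$ with $r \in [0, a)$ and $j \geq 2$. By hypothesis there is $k^* \in \{1, \ldots, 24b+1\}$ with $\alpha_{k^*} = r$; in the typical case $k^* \geq 2$, translating $j$ steps along the AP yields $n = \alpha_{k^* + 12bj}$. The delicate case is $k^* = 1$ with $r$ not attained at any $k \geq 2$, since then $r$ does not lie on any AP. To rescue it, I would apply the hypothesis to $r+a \in [0, 2a)$: it is attained at some $k \in \{1, \ldots, 24b+1\}$, and $k \neq 1$ because $\alpha_1 = r \neq r+a$. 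The key observation then forces the AP representative $k_0 \in \{2, \ldots, 12b+1\}$ of $k$ to satisfy $\alpha_{k_0} \in \{r, r+a\}$, and by the case assumption $\alpha_{k_0} = r+a$, so $n = \alpha_{k_0 + 12b(j-1)}$, a valid index since $j \geq 2$. This $k=1$ exceptional case is the only real subtlety; it is precisely what forces the lemma to use the range $[0, 2a)$ with $k \leq 24b+1$ rather than the simpler-looking $[0, a)$ with $k \leq 12b+1$.
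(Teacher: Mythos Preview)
Your proof is correct and follows essentially the same approach as the paper's: both exploit the fact that $\{\alpha_k\}_{k\ge2}$ decomposes into $12b$ arithmetic progressions of step $a$, and both handle the aberration at $k=1$ by using the doubled interval $[0,2a)$. The only cosmetic difference is in the $(\Leftarrow)$ direction: the paper preemptively chooses a base interval $[n_0,n_0+a)$ (with $n_0\in\{0,a\}$) that avoids $\alpha_1$, whereas you do a direct case analysis and fall back to $r+a$ when $r$ is attained only at $k=1$; these are two equivalent ways of implementing the same trick.
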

\begin{proof}
    First, suppose that $\{\alpha_k\}_{1\le k \le 24b+1}$ takes on every natural number in the range $[0,2a)$. Then take either $n_0 = 0$ or $n_0 = a$, chosen such that $\alpha_1 \notin [n_0,n_0+a)$. Now, for each natural number $n < n_0$, the sequence $\{\alpha_k\}_{k\ge1}$ takes on $n_0$ by assumption. And for each $n \ge n_0$, write $n = at+n'$ for $n' \in [n_0,n_0+a)$ and $t \ge 0$. Then since $n' \in [n_0,n_0+a)$, there exists some $k \ge 2$ such that $\alpha_k = n'$. Then
    \begin{align}
        \alpha_{k+12tb} &= \frac{a}{12b}(k+12tb) + c(k+12tb) 
        = \frac{a}{12b}k+c(k) +ta 
        = \alpha_k + ta = n,
    \end{align}
    and so $\{\alpha_k\}_{k\ge1}$ takes on every natural number.

    Next, suppose that $\{\alpha_k\}_{k\ge1}$ takes on every natural number. Then for each $n \in [0,2a)$, there exists a $k$ such that $\alpha_k = n$. We claim that, in fact, $1 \le k \le 24b+1$. Suppose for contradiction that $k \ge 24b+2$. Then $k-24b \ge 2$, so
    \begin{align}
        \alpha_{k-24b} = \frac{a}{12b}(k-24b) + c(k-24b)  
        = \frac{a}{12b}k+c(k) - 2a 
        = \alpha_k - 2a < 2a - 2a = 0,
    \end{align}
    which is a contradiction since $\{\alpha_k\}_{k\ge1}$ was supposed to be a sequence of natural numbers.
\end{proof}

We are now ready to prove Theorem \ref{thm:k-indexed}.
{
\renewcommand{\thetheorem}{\ref{thm:k-indexed}}
\begin{theorem} 
    For fixed $N \ge 1$, the sequence $\{ \dim S_{2k}(N) \}_{k\ge1}$ takes on every natural number 
    iff $N \in \{1,2,3,4\}$.
    Similarly, the sequence $\{ \dim S_{2k}^\nw(N) \}_{k\ge1}$ 
    takes on every natural number iff $N \in \{1,2,3,4,8,12,16,18\}$.
\end{theorem}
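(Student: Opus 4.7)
The plan is to apply Lemma \ref{lemma:k-periodic} to reduce each fixed-$N$ question to a finite computation, and to pair it with a density argument that restricts the set of candidate $N$ to a finite list.

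For the full-space case, Formula \ref{form:fullspace} shows that for $k \ge 2$,
$$
\dim S_{2k}(N) = \frac{\psi(N)}{6}\,k + c_N(k),
$$
where $c_N(k)$ is $12$-periodic in $k$ (since $c_2, c_3$ have periods $2$ and $3$ respectively, and the remaining terms are constant in $k$). Lemma \ref{lemma:k-periodic} applied with $a = 2\psi(N)$ and $b = 1$ then reduces the question to whether the first $25$ values cover $[0, 4\psi(N))$. On the other hand, among the first $K$ terms there are at most $K$ distinct values, all bounded by $\psi(N)K/6 + O(1)$, so whenever $\psi(N) > 6$ the number of uncovered naturals in this window grows without bound. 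This restricts attention to $\psi(N) \le 6$, namely $N \in \{1,2,3,4,5\}$. Each of these is then dispatched directly: for $N \in \{1,2,3,4\}$ the first $25$ values cover the required range, while for $N = 5$ one computes that the sequence is $0,1,1,3,3,5,5,\ldots$ (eventually $k-1$ for even $k$ and $k-2$ for odd $k\ge3$), and so omits $2$.

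For the newspace case, the argument runs along identical lines, using Formula \ref{form:newspace} with $\psi'$ in place of $\psi$: for $k \ge 2$ one has $\dim S_{2k}^\nw(N) = \frac{\psi'(N)}{6}\,k + c'_N(k)$ with $c'_N$ again $12$-periodic, and the density obstruction now forces $\psi'(N) \le 6$. Enumerating via multiplicativity and the explicit values of $\psi'(p^r)$ produces the finite list
$$
\{1,2,3,4,5,6,7,8,9,10,12,14,16,18,20,24,28,36\},
$$
and testing each by Lemma \ref{lemma:k-periodic} shows exactly $\{1,2,3,4,8,12,16,18\}$ survive; the rest each fail because some small natural is visibly missing from the first few terms. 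The main obstacle is not conceptual but organizational: the newspace case requires eighteen separate finite verifications, each with its own target window $[0, 4\psi'(N))$. Once the density bound has been established, however, every remaining step is a mechanical evaluation of the dimension formula on $1 \le k \le 25$.
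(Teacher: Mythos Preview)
Your proof is correct and follows the same two-stage strategy as the paper: first reduce to finitely many candidate $N$, then apply Lemma \ref{lemma:k-periodic} to turn each surviving candidate into a finite check. The second reduction is identical to the paper's.

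Your first reduction, however, is genuinely different. The paper appeals to the pre-computed tables in \cite{ross} listing all pairs $(N,k)$ with $\dim S_{2k}(N)=0$ (resp.\ $\dim S_{2k}^\nw(N)=0$), arguing that $0$ must lie in the range of the sequence; this cuts the full-space problem down to fifteen candidates and the newspace problem to eighteen. You instead observe that the sequence has multiset density $6/\psi(N)$ (resp.\ $6/\psi'(N)$) and that this must be at least $1$, which gives a different finite list: five candidates for the full space and eighteen for the newspace (though not the same eighteen as the paper's). Your approach is more self-contained---it does not depend on external tables---and in fact coincides with the technique the paper itself uses later for Theorem \ref{thm:k-indexed-sgnpatt} via Lemma \ref{lem:multiset-density}, so it unifies the two arguments nicely. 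The paper's approach, on the other hand, is a natural first sanity check and reuses results already in hand.

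One small issue of phrasing in your density step: you write that among the first $K$ terms there are at most $K$ distinct values, all bounded by $\psi(N)K/6+O(1)$, and conclude that uncovered naturals remain in that window. As written this does not quite close, since terms with index $k>K$ might still land in $[0,\psi(N)K/6]$. The argument should run the other way: any term $\alpha_k\le x$ forces $k\le 6(x+C)/\psi(N)$ for a suitable constant $C$, so at most $6x/\psi(N)+O(1)$ terms lie in $[0,x]$; when $\psi(N)>6$ this is eventually less than $x+1$. This is clearly what you intend, and the fix is purely cosmetic.
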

\addtocounter{theorem}{-1}
}
\begin{proof}
    First, we show the result for $S_{2k}(N)$. In \cite[Table 2.6]{ross}, we computed the complete list of pairs $(N,k)$ for which $\dim S_{2k}(N) = 0$. From this table, one can see that $\{ \dim S_{2k}(N) \}_{k\ge1}$ contains $0$ only for $N \in \{1, 2, 3, 4, 5, 6, 7, 8, 9, 10, 12, 13, 16, 18, 25\}$.

    Then for each of these remaining fixed values of $N$, we are interested in the sequence $\{ \dim S_{2k}(N) \}_{k\ge1}$. Observe that by Formula \ref{form:fullspace}, this sequence takes the form of Lemma \ref{lemma:k-periodic} with $a=2\psi(N)$, $b=1$.
    Thus we only need to check the finite set of values from Lemma \ref{lemma:k-periodic} to determine whether or not $\{ \dim S_{2k}(N) \}_{k\ge1}$ takes on every natural number. Checking these finite sets by computer shows that  $\{ \dim S_{2k}(N) \}_{k\ge1}$ takes on every natural number precisely for $N \in \{1,2,3,4\}$ \cite{ross-code}.

    We show the result for $S_{2k}^\nw(N)$ via an identical argument. \cite[Table 6.2]{ross} gives the complete list of pairs $(N,k)$ for which $\dim S_{2k}^\nw(N) = 0$. From this table, $\{ \dim S_{2k}^\nw(N) \}_{k\ge1}$ contains $0$ only for $N \in \{1, 2, 3, 4, 5, 6, 7, 8, 9, 10, 12, 13, 16, 18, 22, 25, 28, 60\}$.

    Then by Formula \ref{form:newspace}, $\{ \dim S_{2k}^\nw(N) \}_{k\ge1}$ takes the form of Lemma \ref{lemma:k-periodic} with $a=2\psi'(N)$, $b=1$.
    Then checking the finite sets from Lemma \ref{lemma:k-periodic} by computer shows that $\{ \dim S_{2k}^\nw(N) \}_{k\ge1}$ takes on every natural number precisely for $N \in \{1,2,3,4,8,12,16,18\}$ \cite{ross-code}.
\end{proof}

\section{Weight indexed dimension sequences with sign patterns}

In this section, we consider fixed squarefree $N$ with sign pattern $\sigma$, and classify precisely when $\{ \dim S_{2k}^\sigma(N) \}_{k\ge1}$ and $\{ \dim S_{2k}^{\nw,\sigma}(N) \}_{k\ge1}$ take on all natural numbers. 

Recall that given a multiset of natural numbers $A$, its multiset density is defined as
$$
\rho(A) := \lim_{x \to \infty} \frac{ \#\{a \in A \colon a \le x\} }{ x }.
$$
Clearly, a sequence of natural numbers $\{\alpha_k\}_{k\ge1}$ would need to have $\rho(\{\alpha_k\}_{k\ge1}) \ge 1$ in order to take on every natural number. 
So we first compute the multiset densities for the dimension sequences in question.
\begin{lemma} \label{lem:multiset-density}
    For fixed squarefree $N \ge 1$ with sign pattern $\sigma$, the dimension sequences $\{ \dim S_{2k}^\sigma(N) \}_{k\ge1}$ and $\{ \dim S_{2k}^{\nw,\sigma}(N) \}_{k\ge1}$ have multiset densities
    \begin{align}
        \rho(\{ \dim S_{2k}^\sigma(N) \}_{k\ge1}) 
        &= \frac{6 \cdot 2^{\omega(N)}}{\psi(N)}, \\
        \rho(\{ \dim S_{2k}^{\nw,\sigma}(N) \}_{k\ge1} )
        &= \frac{6 \cdot 2^{\omega(N)}}{\psi'(N)}.
    \end{align}
\end{lemma}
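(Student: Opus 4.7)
The plan is to combine the sign pattern decomposition Formula \ref{form:dim-sgnpatt-convolution} with the periodicity of Atkin-Lehner traces from Lemma \ref{lem:tr-W-d-periodic} to reduce each dimension sequence to the form ``linear in $k$ plus a bounded periodic correction.'' Once the sequence is in that form, the multiset density is simply the reciprocal of the linear slope.

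More precisely, I would first write, using Formula \ref{form:dim-sgnpatt-convolution},
\begin{align}
\dim S_{2k}^\sigma(N) = 2^{-\omega(N)} \dim S_{2k}(N) + 2^{-\omega(N)} \sum_{\substack{d \mid N \\ d \ne 1}} \sigma(d) \Tr_{S_{2k}(N)} W_d,
\end{align}
where I have separated out the $d=1$ term using $W_1 = \mathrm{id}$. By Lemma \ref{lem:tr-W-d-periodic}, each summand with $d \ne 1$ is $12$-periodic in $k$ for $k \ge 2$. By Formula \ref{form:fullspace}, $\dim S_{2k}(N) = \frac{2k-1}{12}\psi(N) - \frac12 \nu_\infty(N) + c_2(k)\nu_2(N) + c_3(k)\nu_3(N)$ for $k \ge 2$, and the last three terms are bounded $12$-periodic functions of $k$. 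Combining, for $k \ge 2$ we obtain
\begin{align}
\dim S_{2k}^\sigma(N) = \frac{\psi(N)}{6 \cdot 2^{\omega(N)}} \cdot k + c(k),
\end{align}
where $c(k)$ is a bounded $12$-periodic function of $k$ (absorbing the constant $-\frac{\psi(N)}{12 \cdot 2^{\omega(N)}}$ from $2k-1$ into the periodic part). The proof for $\{\dim S_{2k}^{\nw,\sigma}(N)\}_{k \ge 1}$ is identical, using Formula \ref{form:newspace} in place of Formula \ref{form:fullspace} and the newspace half of Lemma \ref{lem:tr-W-d-periodic}; the slope becomes $\frac{\psi'(N)}{6 \cdot 2^{\omega(N)}}$.

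To finish, I would show the elementary fact that if $\{\alpha_k\}_{k \ge 1}$ is a sequence of natural numbers with $\alpha_k = \alpha k + c(k)$ for $k \ge 2$, where $\alpha > 0$ and $c(k)$ is bounded, then $\rho(\{\alpha_k\}_{k\ge1}) = 1/\alpha$. This follows by bounding $\#\{k : \alpha_k \le x\}$ between $\frac{x - \max c}{\alpha}$ and $\frac{x - \min c}{\alpha} + O(1)$, and dividing by $x$ before taking the limit. The value $\alpha_1$ contributes only $O(1/x)$ to the density, so the aberration at $k = 1$ is harmless. Applying this with $\alpha = \frac{\psi(N)}{6 \cdot 2^{\omega(N)}}$ (respectively $\frac{\psi'(N)}{6 \cdot 2^{\omega(N)}}$) yields the claimed densities.

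The main obstacle here is essentially just bookkeeping: verifying that the convolution of the linear-plus-periodic piece from Formula \ref{form:fullspace} (or Formula \ref{form:newspace}) with the sign pattern indeed retains its linear-plus-periodic shape, and identifying the correct leading coefficient. There is no deep difficulty: the $k=1$ anomaly from the $\delta_{k,1}$ term and from the fact that Lemma \ref{lem:tr-W-d-periodic} only applies for $k \ge 2$ is harmless because the multiset density ignores any finite set of initial values.
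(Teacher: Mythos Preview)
Your proposal is correct and follows essentially the same approach as the paper: separate the $d=1$ term in Formula~\ref{form:dim-sgnpatt-convolution}, use Lemma~\ref{lem:tr-W-d-periodic} to bound the remaining Atkin--Lehner traces, and read off the linear leading term from Formula~\ref{form:fullspace} (resp.\ \ref{form:newspace}) to conclude $\dim S_{2k}^\sigma(N) = \frac{\psi(N)}{6\cdot 2^{\omega(N)}}k + O(1)$. The paper phrases this with $O(1)$ rather than explicitly tracking the $12$-periodicity of $c(k)$, but the substance is identical.
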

\begin{proof}
    Using big-$O$ notation with respect to $k$, observe that from Formula \ref{form:fullspace} and Lemma \ref{lem:tr-W-d-periodic}, we have
    \begin{align}
        \dim S_{2k}(N) = \frac{\psi(N)}{6} k + O(1) \qquad \text{and} \qquad \Tr_{S_{2k}(N)} W_d = O(1) \ \ \text{for } d \ne 1.
    \end{align}
    This means that by Formula \ref{form:dim-sgnpatt-convolution},
    \begin{align}
        \dim S_{2k}^\sigma(N) &= 2^{- \omega(N)}\sum_{d \mid N} \sigma(d) \Tr_{S_{2k}(N)} W_d \\
        &= 2^{- \omega(N)} \lrb{ \dim S_{2k}(N)  +   \sum_{1 \ne d \mid N} \sigma(d) \Tr_{S_{2k}(N)} W_d } \\ 
        &= 2^{-\omega(N)} \lrb{ \frac{\psi(N)}{6} k + O(1) } \\
        &= \frac{\psi(N)}{6 \cdot 2^{\omega(N)}} k + O(1), \label{eqn:fullspace-sgnpatt-dim-special-form}
    \end{align}
    and so $\rho(\{ \dim S_{2k}^\sigma(N) \}_{k\ge1}) 
        = \frac{6 \cdot 2^{\omega(N)}}{\psi(N)}$.
    The multiset density for the newspace dimension sequences follows by an identical argument.  
\end{proof}

This calculation then allows us to classify precisely when the sign pattern dimension sequences take on every natural number.
{
\renewcommand{\thetheorem}{\ref{thm:k-indexed-sgnpatt}}
\begin{theorem} 
    For fixed squarefree $N \ge 1$ with sign pattern $\sigma$, the sequence $\{ \dim S_{2k}^\sigma(N) \}_{k\ge1}$ 
    takes on every natural number iff $N \in \{1,2,3,5,6,7,10,11,14,15\}$ and $(N,\sigma) \ne (10,+-)$. 
    Similarly, the sequence $\{ \dim S_{2k}^{\nw,\sigma}(N) \}_{k\ge1}$ takes on every natural number iff 
$N \in \{
1,\allowbreak 2,\allowbreak 3,\allowbreak 5,\allowbreak 6,\allowbreak 7,\allowbreak 10,\allowbreak 14,\allowbreak 15,\allowbreak 21,\allowbreak 26,\allowbreak 30,\allowbreak 42,\allowbreak 66,\allowbreak 70,\allowbreak 78,\allowbreak 210
\}$ or 
$(N,\sigma)$ is in the list given in Table \ref{table:sporadic-cases}.
\end{theorem}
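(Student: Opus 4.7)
The plan is to reduce the problem to a finite computer search in two stages: first using the multiset density bound to cut $N$ down to a finite candidate list, and then using the periodic-linear structure of each dimension sequence to turn each remaining question into a finite verification.

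For the first stage, any sequence of natural numbers taking on every value must have multiset density at least $1$, so Lemma~\ref{lem:multiset-density} forces $\psi(N) \le 6 \cdot 2^{\omega(N)}$ in the full-space case and $\psi'(N) \le 6 \cdot 2^{\omega(N)}$ in the newspace case. Since $N$ is squarefree, these conditions rearrange to
$$
\prod_{p \mid N} \frac{p+1}{2} \le 6 \qquad \text{and} \qquad \prod_{p \mid N} \frac{p-1}{2} \le 6,
$$
respectively. A routine case analysis on $\omega(N)$ then yields an explicit finite candidate list in each case. For the full space I expect this bound alone to recover exactly $\{1,2,3,5,6,7,10,11,14,15\}$; for the newspace it will strictly contain the final list (producing extra candidates such as $N \in \{11,13,22,33,105,330,390\}$) which will need to be eliminated in the next stage.

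For the second stage, for each surviving candidate $(N,\sigma)$ I combine Formula~\ref{form:dim-sgnpatt-convolution} with Lemma~\ref{lem:tr-W-d-periodic} and Formula~\ref{form:fullspace} (respectively Formula~\ref{form:newspace}) to obtain, for $k \ge 2$,
$$
\dim S_{2k}^\sigma(N) = \frac{\psi(N)}{6 \cdot 2^{\omega(N)}}\, k + c(k),
$$
with $c(k)$ a $12$-periodic function of $k$, and analogously for the newspace with $\psi'(N)$ in place of $\psi(N)$. This places the sequence in the form $\tfrac{a}{12b}k + c(k)$ required by Lemma~\ref{lemma:k-periodic}, with $a = 2\psi(N)$ and $b = 2^{\omega(N)}$ in the full-space case (and $a = 2\psi'(N)$ in the newspace case). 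That lemma then converts the question into the finite test of whether $\{\dim S_{2k}^\sigma(N)\}_{1 \le k \le 24 \cdot 2^{\omega(N)} + 1}$ covers the interval $[0, 2a)$.

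The final step is to run this finite test by computer over every candidate $(N,\sigma)$; the result gives the two lists in the theorem and populates Table~\ref{table:sporadic-cases}. The main obstacle is not conceptual but organizational: in the newspace case the density inequality substantially over-generates candidates, and the sign-dependent sporadic behaviour recorded in Table~\ref{table:sporadic-cases} appears to admit no closed-form prediction, so the interaction between the periodic traces $\Tr W_d$ and the sign pattern $\sigma$ really does have to be inspected case by case. Some extra care is also needed around the $k=1$ aberration, since Lemma~\ref{lem:tr-W-d-periodic} only guarantees the $12$-periodicity of the trace terms from $k \ge 2$ onward; Lemma~\ref{lemma:k-periodic} is designed to accommodate exactly this, which is why the finite check there starts at $k=1$ rather than $k=2$.
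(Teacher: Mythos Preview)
Your proposal is correct and follows essentially the same two-stage reduction as the paper: first cutting down to finitely many $N$ via the multiset density bound of Lemma~\ref{lem:multiset-density}, then invoking the periodic-linear structure to apply Lemma~\ref{lemma:k-periodic} with $a = 2\psi(N)$ (resp.\ $2\psi'(N)$) and $b = 2^{\omega(N)}$, leaving a finite computer check. The only cosmetic difference is that you obtain the candidate list by rewriting the density inequality as $\prod_{p\mid N}\tfrac{p\pm1}{2}\le 6$ and doing a direct case analysis on $\omega(N)$, whereas the paper appeals to the general bounds of Lemma~\ref{lemma:mult-bound}; for squarefree $N$ your route is in fact a bit cleaner.
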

\addtocounter{theorem}{-1}
}
\begin{proof}
    First, we consider the full space dimension sequences $\{ \dim S_{2k}^\sigma(N) \}_{k\ge1}$. By Lemma \ref{lem:multiset-density}, these sequences have multiset density $\frac{6 \cdot 2^{\omega(N)}}{\psi(N)}$. And observe that this expression tends to $0$ fairly rapidly in $N$. (Here, $\psi(N) \ge N$ and $2^{\omega(N)} \le 4.862\, N^{1/4}$ by Lemma \ref{lemma:mult-bound}.)  It is straightforward to verify that $\frac{6 \cdot 2^{\omega(N)}}{\psi(N)} \ge 1$ only for $N \in \{1, 2, 3, 5, 6, 7, 10, 11, 14, 15\}$ \cite{ross-code}.
    We then only need to check the sequences $\{ \dim S_{2k}^\sigma(N) \}_{k\ge1}$  for this finite list of $N$. 
    
    To check these sequences, observe that each of the $O(1)$ terms in the proof of Lemma \ref{lem:multiset-density} is $12$-periodic in $k$. This means that by \eqref{eqn:fullspace-sgnpatt-dim-special-form}, $\{ \dim S_{2k}^\sigma(N) \}_{k\ge1}$ is in the form of Lemma \ref{lemma:k-periodic} with $a=2 \psi(N)$, $b = 2^{\omega(N)}$. So to determine whether or not $\{ \dim S_{2k}^\sigma(N) \}_{k\ge1}$ takes on every natural number, we only need to check the finite set of values from Lemma \ref{lemma:k-periodic}. Checking these by computer verifies that $\{ \dim S_{2k}^\sigma(N) \}_{k\ge1}$ takes on every natural number precisely for $N \in \{1, 2, 3, 5, 6, 7, 10, 11, 14, 15\}$ (with the corresponding sign patterns), except for $(N,\sigma)=(10,+-)$ \cite{ross-code}. 

    Next, we consider the newspace dimension sequences $\{ \dim S_{2k}^{\nw,\sigma}(N) \}_{k\ge1}$. Similar to before, it is straightforward to verify that $\frac{6 \cdot 2^{\omega(N)}}{\psi'(N)} \ge 1$ only for $N \in \{1,\allowbreak 2,\allowbreak 3,\allowbreak 5,\allowbreak 6,\allowbreak 7,\allowbreak 10,\allowbreak 11,\allowbreak 13,\allowbreak 14,\allowbreak 15,\allowbreak 21,\allowbreak 22,\allowbreak 26,\allowbreak 30,\allowbreak 33,\allowbreak 34,\allowbreak 35,\allowbreak 38,\allowbreak 39,\allowbreak 42,\allowbreak 46,\allowbreak 66,\allowbreak 70,\allowbreak 78,\allowbreak 102,\allowbreak 105,\allowbreak 110,\allowbreak 114,\allowbreak 130,\allowbreak 138,\allowbreak 210,\allowbreak 330,\allowbreak 390\}$ \cite{ross-code}. Then to check these remaining values of $N$, we observe that $\{ \dim S_{2k}^{\nw,\sigma}(N) \}_{k\ge1}$ is in the form of Lemma \ref{lemma:k-periodic} with $a=2 \psi'(N)$, $b = 2^{\omega(N)}$. Then checking the conditions of Lemma \ref{lemma:k-periodic} by computer verifies the desired result \cite{ross-code}.
\end{proof}

\begin{mytable} \label{table:sporadic-cases}

\begin{small}
\begin{equation}
\setlength{\arraycolsep}{1mm}
\begin{array}{|c|c|c|c|c|c|} 
\hline
\multicolumn{6}{|c|}{\makecell{
    \text{The sporadic $(N,\sigma)$ for which $\{ \dim S_{2k}^{\nw,\sigma}(N) \}_{k\ge1}$ takes on every natural number. }  
  }} \\
\hline 
(11,+) & (22,++) & (22,-+) & (22,+-) & (33,-+) & (34,++) \\
\hline 
(34,+-) & (38,++) & (38,-+) & (102,+++) & (102,--+) & (102,+--) \\
\hline 
(102,---) & (110,+++) & (110,--+) & (110,-+-) & (110,+--) & (114,+++) \\
\hline 
(114,+-+) & (114,--+) & (114,++-) & (114,-+-) & (114,+--) & (330,+-++) \\
\hline 
(330,--++) & (330,+--+) & (330,---+) & (330,+-+-) & (330,--+-) & (330,+---) \\
\hline
\end{array}
\end{equation}
\end{small}

\end{mytable}

With the classification of Theorem \ref{thm:k-indexed-sgnpatt}, we can then easily show an equivalent condition for the full space sequences taking on every natural number exactly once.

{
\renewcommand{\thecorollary}{\ref{cor:k-indexed-sgnpatt}}
\begin{corollary}
For fixed squarefree $N \ge 1$ with sign pattern $\sigma$, the sequence $\{ \dim S_{2k}^\sigma(N) \}_{k\ge1}$ takes on every natural number exactly once iff it has multiset density $\rho = 1$.
\end{corollary}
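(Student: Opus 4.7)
The forward direction is immediate: any sequence of natural numbers taking each value exactly once is a bijection with $\NN$ and so has multiset density $1$.

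For the backward direction, my plan is to first use Lemma \ref{lem:multiset-density} to translate the condition $\rho = 1$ into the multiplicative equation $\psi(N) = 6 \cdot 2^{\omega(N)}$. Since $N$ is squarefree, this reads $\prod_{p \mid N} \frac{p+1}{2} = 6$. Because $(p+1)/2 \ge 3/2$ at $p = 2$ and $(p+1)/2 \ge 2$ for $p \ge 3$, any $N$ with three or more distinct prime factors overshoots $6$, so a short enumeration over $\omega(N) \in \{1,2\}$ yields exactly $N \in \{11, 14, 15\}$ (from $6$, $\tfrac{3}{2}\cdot 4$, and $2 \cdot 3$, respectively).

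For each such $N$ and each of the corresponding sign patterns $\sigma$, Theorem \ref{thm:k-indexed-sgnpatt} already supplies that $\{\dim S_{2k}^\sigma(N)\}_{k \ge 1}$ takes every natural number, so only the ``no repetitions'' half remains. Here I would invoke the structural identity derived in the proof of Lemma \ref{lem:multiset-density}: when $\rho = 1$, the linear coefficient $\psi(N)/(6 \cdot 2^{\omega(N)})$ in \eqref{eqn:fullspace-sgnpatt-dim-special-form} equals $1$, so for $k \ge 2$ the sequence takes the form $\alpha_k := \dim S_{2k}^\sigma(N) = k + c(k)$ with $c$ a $12$-periodic function. This gives the shift relation $\alpha_{k+12} = \alpha_k + 12$ for $k \ge 2$, so the tail $\{\alpha_k\}_{k \ge 2}$ is a disjoint union of $12$ arithmetic progressions with common difference $12$, starting at $\alpha_2, \ldots, \alpha_{13}$. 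A standard density argument (combining $\rho = 1$ with every natural being hit at least once) forces the residues of $\alpha_2, \ldots, \alpha_{13}$ to be distinct modulo $12$: otherwise some residue class is multiply covered on a positive-density subset, forcing $\rho > 1$. Consequently the $12$ progressions together miss at most one natural number, and whether the full sequence is a permutation hinges on whether $\alpha_1$ fills that single missing value or instead duplicates an existing one.

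Distinguishing these two possibilities is the main obstacle: density $1$ alone cannot separate them, and the aberration of the dimension formula at $k = 1$ is precisely what makes either scenario plausible a priori. To resolve it, I would verify the correct ``one-gap'' configuration by directly computing $\alpha_1, \ldots, \alpha_{13}$ for each of the $2 + 4 + 4 = 10$ pairs $(N, \sigma)$. This finite check aligns with and generalizes the three explicit bijection patterns $\{0,1,2,3,\ldots\}$, $\{0,2,1,4,3,\ldots\}$, and $\{1,0,3,2,5,\ldots\}$ exhibited in the introduction for $(N,\sigma) = (14,-+)$, $(11,+)$, and $(15,+-)$ respectively, and completes the proof.
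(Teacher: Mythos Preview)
Your proposal is correct and follows the paper's approach: reduce to $N \in \{11,14,15\}$ via Lemma~\ref{lem:multiset-density}, then resolve each of the ten $(N,\sigma)$ by computing the first thirteen terms. Your intermediate structural observation (that surjectivity from Theorem~\ref{thm:k-indexed-sgnpatt} together with $\rho=1$ forces the residues of $\alpha_2,\ldots,\alpha_{13}$ to be distinct modulo $12$, so only the placement of $\alpha_1$ remains in question) is a helpful addition that the paper omits in favor of simply exhibiting all ten sequences explicitly.
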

\addtocounter{theorem}{-1}
}
\begin{proof}
    Clearly, the condition $\rho = 1$ is necessary. To show that it is sufficient, we only need to check the finitely many cases of Theorem \ref{thm:k-indexed-sgnpatt} where $\rho = 1$. By Lemma \ref{lem:multiset-density}, $\rho(\{ \dim S_{2k}^\sigma(N) \}_{k\ge1}) = \frac{6\cdot 2^{\omega(N)}}{\psi(N)}$, and it is straightforward to verify that $\frac{6\cdot 2^{\omega(N)}}{\psi(N)} = 1$ only for $N \in \{11,14,15\}$. From the proof of Theorem \ref{thm:k-indexed-sgnpatt}, each of these sequences is of the form $\{ \dim S_{2k}^\sigma(N) \}_{k\ge1} = k + c(k)$, where $c(k)$ is $12$-periodic for $k\ge2$. So we can determine the general formula for each $\{ \dim S_{2k}^\sigma(N) \}_{k\ge1}$ by just computing the terms for $1\le k \le 13$. This computation \cite{ross-code} yields the following identities:
    \begin{align*}
        \{ \dim S_{2k}^{+}(11) \}_{k\ge1}
        &= \{0, 2, 1, 4, 3, 6, 5, 8, 7, 10, 9, 12, 11,\ldots\} = \{k-1+(-1)^k+\delta_{k,1}\}_{k\ge1}, \\
        \{ \dim S_{2k}^{-}(11) \}_{k\ge1}
        &= \{1, 0, 3, 2, 5, 4, 7, 6, 9, 8, 11, 10, 13,\ldots\} = \{k-1-(-1)^k\}_{k\ge1}, \\
        \{ \dim S_{2k}^{++}(14) \}_{k\ge1} 
        &= \{0, 2, 1, 4, 3, 6, 5, 8, 7, 10, 9, 12, 11,\ldots\} = \{k-1+(-1)^k+\delta_{k,1}\}_{k\ge1}, \\
        \{ \dim S_{2k}^{-+}(14) \}_{k\ge1} 
        &= \{0, 1, 2, 3, 4, 5, 6, 7, 8, 9, 10, 11, 12,\ldots\} = \{k-1\}_{k\ge1}, \\
        \{ \dim S_{2k}^{+-}(14) \}_{k\ge1} 
        &= \{1, 0, 3, 2, 5, 4, 7, 6, 9, 8, 11, 10, 13,\ldots\} = \{k-1-(-1)^k\}_{k\ge1}, \\
        \{ \dim S_{2k}^{--}(14) \}_{k\ge1} 
        &= \{0, 1, 2, 3, 4, 5, 6, 7, 8, 9, 10, 11, 12,\ldots\} = \{k-1\}_{k\ge1}, \\
        \{ \dim S_{2k}^{++}(15) \}_{k\ge1} 
        &= \{0, 2, 1, 4, 3, 6, 5, 8, 7, 10, 9, 12, 11,\ldots\} = \{k-1+(-1)^k+\delta_{k,1}\}_{k\ge1}, \\
        \{ \dim S_{2k}^{-+}(15) \}_{k\ge1} 
        &= \{0, 1, 2, 3, 4, 5, 6, 7, 8, 9, 10, 11, 12,\ldots\} = \{k-1\}_{k\ge1}, \\
        \{ \dim S_{2k}^{+-}(15) \}_{k\ge1} 
        &= \{1, 0, 3, 2, 5, 4, 7, 6, 9, 8, 11, 10, 13,\ldots\} = \{k-1-(-1)^k\}_{k\ge1}, \\
        \{ \dim S_{2k}^{--}(15) \}_{k\ge1} 
        &= \{0, 1, 2, 3, 4, 5, 6, 7, 8, 9, 10, 11, 12,\ldots\} = \{k-1\}_{k\ge1}.
    \end{align*}
    These each take on every natural number exactly once, verifying the desired result.
\end{proof}

\section{Discussion}

First, we note that the phenomenon observed in Corollary \ref{cor:k-indexed-sgnpatt} is surprising in that it only appears for the full sign pattern spaces $\dim S_{2k}^\sigma(N)$. Nothing like this result holds for any of the other families of dimension sequences we investigated. In every other family, there are plenty of examples of dimension sequences with the correct density, but which do not take on every natural number exactly once. 

As noted in the introduction, this phenomenon is also surprising in the manner that the dimension sequences take on every natural number exactly once. Observe that several of the patterns listed in the proof of Corollary \ref{cor:k-indexed-sgnpatt} take on every natural number exactly once in non-trivial ways, such as
$\{0,2,1,4,3,6,5,8...\}$ and  $\{1,0,3,2,5,4,7,6,...\}$. We would also like to point out that this structure is only possible because of the aberration at $k=1$ in the general dimension formula for $S_{2k}^\sigma(N)$. Usually, the aberration at $k=1$ in the dimension formulas for various modular forms spaces is an inconvenience. For example, it makes bounding the dimension formulas a little more annoying, and it made the proof of Lemma \ref{lemma:k-periodic} somewhat awkward. However in the case of Corollary \ref{cor:k-indexed-sgnpatt}, the aberration at $k=1$ is precisely what makes the result work.

Next, we would like to point out the strategies that made all of the results in this paper possible. For each result, we considered dimensions indexed by two parameters, $N$ and $k$. This meant that for each dimension sequence, we had to make two reductions from checking infinitely many conditions to checking only finitely many conditions. For example, to classify when $\{\dim S_{2k}(N)\}_{k\ge 1}$ took on every natural number, we first had to reduce the infinitely many sequences (for all $N\ge 1$) to only a finite list of possible $N$ to check. Second, for each of these values of $N$, we had to reduce the infinite problem of checking that $\{\dim S_{2k}(N)\}_{k\ge 1}$ took on \textit{every} natural number to a only finite problem that could be checked by computer. For each family of dimension sequences addressed in this paper, we summarize the strategy used for these two reductions.
\begin{itemize}
    \item 
    The first reduction for $\{ \dim S_{2k}(N) \}_{N\ge1}$ used the well-known level $1$ dimension formula for $\dim S_{2k}(1)$ and the fact that $\dim S_{2k}(N) \ge \dim S_{2k}(1)$. 
    \item 
    The first reduction for $\{\dim S_{2k}^\nw(N) \}_{N\ge1}$ used the table \cite[Table 6.2]{ross} of all pairs $(N,k)$ for which $\dim S_{2k}^\nw(N) = 0$.
    \item 
    The second reductions for $\{ \dim S_{2k}(N) \}_{N\ge1}$ and $\{\dim S_{2k}^\nw(N) \}_{N\ge1}$ both used explicit bounds on the dimension formula terms (namely $B_k(N)$ and $B'_k(N)$), with the help of Lemma \ref{lemma:mult-bound}.
    \item
    The first reductions for $\{ \dim S_{2k}(N) \}_{k\ge1}$ and $\{ \dim S_{2k}^\nw(N) \}_{k\ge1}$ used the tables \cite[Table 2.6]{ross} and \cite[Table 6.2]{ross} of all pairs $(N,k)$ for which $\dim S_{2k}(N) = 0$ and $\dim S_{2k}^\nw(N) = 0$.
    \item 
    The second reductions for $\{ \dim S_{2k}(N) \}_{k\ge1}$ and $\{ \dim S_{2k}^\nw(N) \}_{k\ge1}$ used Lemma \ref{lemma:k-periodic} with $(a,b) = (2\psi(N),1)$ and $(a,b) = (2\psi'(N),1)$, respectively.
    \item 
    The first reductions for $\{ \dim S_{2k}^\sigma(N) \}_{k\ge1}$ and $\{ \dim S_{2k}^{\nw,\sigma}(N) \}_{k\ge1}$ used the multiset densities of these sequences computed in Lemma \ref{lem:multiset-density}.
    \item 
    The second reductions for $\{ \dim S_{2k}^\sigma(N) \}_{k\ge1}$ and $\{ \dim S_{2k}^{\nw,\sigma}(N) \}_{k\ge1}$ used Lemma \ref{lemma:k-periodic} with $(a,b) = (2\psi(N),2^{\omega(N)})$ and $(a,b) = (2\psi'(N),2^{\omega(N)})$, respectively.
\end{itemize}

Finally, we take note of future work that still needs to be done in this area. There are several statistical properties of the dimensions of modular forms spaces that could still be investigated. For example, it was shown in \cite{csirik} that the set $\{ \dim S_2(N) \}_{N\ge 1}$ has asymptotic density $0$ in the natural numbers. However, it is not known what the density of $\{ \dim S_2^\nw(N) \}_{N\ge 1}$ is. This question was first raised to us by Greg Martin.
\begin{conjecture}
    The set $\{ \dim S_2^\nw(N) \}_{N\ge 1}$ has asymptotic density $0$.
\end{conjecture}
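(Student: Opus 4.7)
The natural approach is to adapt Erd\H{o}s's classical result that the image of Euler's totient function has asymptotic density $0$ in $\mathbb{N}$. The motivation comes from Formula \ref{form:newspace}: for squarefree $N$, $\nu'_\infty(N) = 0$ and $\psi'(p) = p - 1$, so
$$
\dim S_2^{\nw}(N) = \tfrac{1}{12}\phi(N) + E(N),
$$
where $\phi$ is Euler's totient and the perturbation $E(N) = -\tfrac{1}{4}\nu'_2(N) - \tfrac{1}{3}\nu'_3(N) + \mu(N)$ satisfies $|E(N)| = O(2^{\omega(N)}) = O(N^\varepsilon)$. The hope is that the image of $\dim S_2^{\nw}$ inherits Erd\H{o}s's density-$0$ property from this leading term.

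The plan would proceed in four steps. First, reduce to squarefree $N$ using the simplification above. Second, invoke Erd\H{o}s's theorem $\#\{\phi(N) : N \le X\} = o(X)$, which places the set of leading values $\phi(N)/12$ in a density-$0$ subset of $\mathbb{N}$. Third, control the perturbation: for each fixed $\phi$-value $v$, quantify how many distinct values of $E(N)$ appear across the preimage $\{N : \phi(N) = v\}$, using Ford's refined bounds on the multiplicity of totient values. Provided this count grows slowly enough, summing over all $\phi$-values up to $12X + O(X^\varepsilon)$ yields a total count of distinct newspace dimensions of size $o(X)$. Fourth, handle non-squarefree $N$ by a parallel analysis that exploits the multiplicative structure of $\psi'$ at prime powers; since the non-squarefree integers up to $X$ themselves have a predictable structure, the same style of argument should extend, albeit with more book-keeping.

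The main obstacle is the third step: controlling the perturbation $E(N)$. Erd\H{o}s's argument crucially uses the divisibility constraint $(p-1) \mid \phi(N)$ whenever $p \mid N$, which forces many collisions in the image of $\phi$; there is no analogous arithmetic constraint on $E(N)$, and in principle its variation across the preimage of a single $\phi$-value could inflate the image of $\dim S_2^{\nw}$ to positive density. Bounding the number of distinct $E$-values per $\phi$-value would require a delicate combinatorial argument leveraging the fact that $\nu'_2$, $\nu'_3$, and $\mu$ are themselves determined by the prime factorization of $N$ in ways closely tied to the structure of $\phi(N)$, perhaps combined with character-sum estimates to exploit the local behavior modulo small primes. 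The non-squarefree case poses additional technical difficulties, since $\psi'$ at prime powers acquires factors such as $(p^2-p-1)$ and $(p^2-1)(p-1)p^{r-3}$ which interact with the sieve differently than the clean factor $(p-1)$ that drives the Erd\H{o}s argument.
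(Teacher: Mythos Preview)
The statement you are addressing is explicitly labeled a \emph{Conjecture} in the paper; there is no proof to compare against. The paper lists it as an open problem in the Discussion section, attributing the question to Greg Martin, and offers no argument beyond noting the analogous density-$0$ result of Csirik--Wetherell--Zieve for $\{\dim S_2(N)\}$.

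Your proposal is accordingly not a proof but a strategy sketch, and you yourself identify the central gap in step three. The idea of viewing $\dim S_2^{\nw}(N)$ for squarefree $N$ as a perturbation of $\phi(N)/12$ is natural, but Erd\H{o}s's density-$0$ argument works by forcing many $N$ to share the same $\phi$-value; nothing prevents the perturbation $E(N)$ from separating those collisions back out. Ford's bounds on totient multiplicities tell you how many $N$ lie in a given $\phi$-fiber, not how the residue classes of their prime factors (which determine $\nu'_2$, $\nu'_3$, $\mu$) are distributed within that fiber, so invoking Ford does not obviously close the gap. Step four is likewise only a placeholder: for non-squarefree $N$ the leading term $\psi'(N)/12$ is governed by factors such as $p^2-p-1$ and $(p^2-1)(p-1)p^{r-3}$, and no analogue of the Erd\H{o}s--Ford machinery for such functions is cited or developed. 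As written, the proposal is a reasonable heuristic for why the conjecture might hold, but it is not a proof, and the paper does not claim one either.
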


Additionally, further study could be done more generally regarding the statistical and asymptotic behavior of the traces of Hecke operators (in this paper, we only considered the the special case of $\Tr T_1$, which is the dimension). Some work has already been on this type of question (e.g. \cite{rouse}, \cite{chiriac}, \cite{chiriac-jorza}, \cite{nonrep}, \cite{ross-xue}, \cite{nonvan-new}, \cite{rth-coeff}), but there is still much further investigation to be done. Especially, there seems to be not much work done as of yet regarding the sign pattern spaces $S_{2k}^\sigma(N)$ and  $S_{2k}^{\nw,\sigma}(N)$.

\begin{flushright} S.D.G.\end{flushright}

\section*{Data availability statement}
All code for this paper is publicly available on GitHub \cite{ross-code}.

\bibliographystyle{plain}
\bibliography{bibliography.bib}

\end{document}